\newtheorem{thm}{Theorem}[section]
\newtheorem{lem}[thm]{Lemma}
\newtheorem{pro}[thm]{Proposition}
\newtheorem{cor}[thm]{Corollary}
\newtheorem{cnj}[thm]{Conjecture}
\newtheorem{anz}[thm]{Ansatz}
\newtheorem{alg}[thm]{Algorithm}
\newcommand{\bpf}{\noindent {\em Proof.  }}
\newcommand{\epf}{\qed \vspace{+10pt}}
\newcommand{\Aut}{{\mathrm{Aut}}\;}
\newcommand{\al}{\alpha}
\newcommand{\be}{\beta}
\newcommand{\ga}{\gamma}
\newcommand{\si}{\sigma}
\newcommand{\ta}{\theta}
\newcommand{\sh}{{\mathrm{sh}}}
\newcommand{\ch}{{\mathrm{ch}}}
\newcommand{\xh}{\widehat{x}}
\newcommand{\mT}{\operatorname{\mathsf{T}}}
\newcommand{\sfC}{{\mathsf{K}}}
\newcommand{\vep}{\varepsilon}
\newcommand{\f}{\frac}
\newcommand{\tf}{\tfrac}
\newcommand{\ld}{\ldots}
\newcommand{\cd}{\cdots}
\newcommand{\de}{\delta}
\newcommand{\De}{\Delta}
\newcommand{\mcC}{\mathcal{C}}
\newcommand{\mcP}{\mathcal{P}}
\newcommand{\mcA}{\mathcal{A}}
 \newcommand{\sC}{\mathcal{K}}
\newcommand{\fS}{\mathfrak{S}}
\newcommand{\pa}{\partial}
\newcommand{\pe}{\prime}
\newcommand{\bfz}{{\bf z}}
\newcommand{\bfy}{{\bf y}}
\newcommand{\bfc}{{\bf c}}
\newcommand{\subp}[1]{{\mathsf{dompoly}}_{#1}}
\newcommand{\dmrjdel}[1]{}
\title{Transitive powers of Young-Jucys-Murphy elements are central}
\author[I.~P.~Goulden]{I.P. Goulden$^*$}
\author[D.~M.~Jackson]{D.M.~Jackson$^\dagger$}
\thanks{
${\hspace{-1ex}}^*$Department of Combinatorics and Optimization, 
                                                University of Waterloo, Waterloo, Ontario, Canada.;    \\
${\hspace{.35cm}}$ \texttt{ipgoulden@uwaterloo.ca}}
\thanks{
${\hspace{-1ex}}^\dagger$Department of Combinatorics and Optimization, 
                                                University of Waterloo, Waterloo, Ontario, Canada.;  \\
${\hspace{.35cm}}$ \texttt{dmjackso@math.uwaterloo.ca}}
\date{April 5, 2007}
\begin{document}
\maketitle

\begin{abstract}
Although powers of the Young-Jucys-Murphy
elements $X_i = (1\, i)+(2\, i)+\cd +(i-1\, i), \quad i=1,\ld ,n$, in the
symmetric group $\fS_n$ acting on $\{ 1,\ld ,n\}$ do not lie in the centre of the group algebra
of $\fS_n$, we show that transitive powers, namely the sum of the contributions from elements that
act transitively on~$[n],$ are central.  We determine the coefficients, which we call star factorization 
numbers, that occur in the resolution of transitive powers with respect to the class basis of the 
centre of $\fS_n,$ and show that they have a polynomiality property.
These centrality and polynomiality properties have  seemingly unrelated consequences.
First, they answer a question raised by Pak~\cite{p} about reduced decompositions; second, they
explain and extend the beautiful symmetry result discovered by Irving and Rattan~\cite{ir};
and thirdly, we relate the polynomiality to an existing
polynomiality result for a class of double Hurwitz numbers
associated with branched covers of the sphere, which therefore suggests that there may be an
ELSV-type formula (see \cite{elsv1}) associated with the star factorization numbers.

\dmrjdel{
A star transposition on $[n]=\{ 1,\ld ,n\}$ is a transposition of the form $(1\, a)$, for $a=2,\ld ,n$. 
The factorization $\si=\tau_1\cd\tau_r$ of a permutation $\si$ into star transpositions 
$(\tau_1,\ld ,\tau_r)$ is {\em transitive} when the group generated by the factors $\tau_1,\ld ,\tau_r$ 
acts transitively on $[n]$. A transitive factorization with $2g$ more than the minimum number of factors 
is said to have \emph{genus} $g$, for $g\ge 0$.
We enumerate transitive factorizations of an arbitrary permutation~$\si$ into star transpositions for 
all genera. In each case, the counting
formula is symmetric in the parts of $\al$ where $\si$ is in the conjugacy
class $\sC_\al.$ This extends a result of Pak, who considered one class of permutations in genus $0$, 
and recent work of Irving and Rattan, who gave the complete solution for genus $0$. In both cases, this 
previous work uses a combinatorial bijection. Our method uses a linear partial differential equation for 
the generating series, derived by a join-cut analysis for the left-most factor.
}
\end{abstract}


\section{Introduction and background}\label{intro}
We begin with an account of the main theorem of this paper and its
relationship to the enumeration of a class of ramified covers of the
sphere, a question that arises in algebraic geometry. 

\subsection{Young-Jucys-Murphy elements and the Main Theorem}

The \emph{Young-Jucys-Murphy elements} in the group algebra ${\mathbb{C}}\fS_n$ 
of  the symmetric group $\fS_n$ on $[n]:=\{ 1,\ld ,n\}$,
are given by
$$X_i = (1\, i)+(2\, i)+\cd +(i-1\, i), \qquad i=1,\ld ,n,$$ 
where $X_1=0$ (see, \textit{e.g.},~\cite{vo} for a detailed description and further
references). Let $Z(n)$ denote the centre of ${\mathbb{C}}\fS_n$,~$n\ge 1$. Then the
algebra generated by $Z(1),\ld ,Z(n)$ is called the \emph{Gel'fand-Tsetlin algebra},
and one of the key results described in~\cite{vo} is the fact that this algebra
is also generated by $X_1,\ld ,X_n$, despite the fact that $X_n$ is clearly
\emph{not} contained in $Z(n)$ for any $n>2$.

We define a linear operator $\mT$ on ${\mathbb{C}}\fS_n$ by
$\mT(\si_1\cd\si_r)= \si_1\cd\si_r$, if the group generated by the permutations $\si_1, \cd, \si_r$ acts
transitively on $[n]$, and  $\mT(\si_1\cd\si_r) = 0$ otherwise.
The subject of this paper is $\mT\, X_n^r$, for
an arbitrary non-zero integer $r$,  which we call
a \emph{transitive power} of $X_n$.
It is straightforward matter to apply $\mT$ to $X_n^r=\left(\sum_{i\in [n-1]} (i\, n)\right)^r$,
since the only products not annihilated are those containing at least one
occurrence of $(i\, n)$ as a factor for every $i\in [n-1]$. It
follows immediately from the Principle of  Inclusion-Exclusion that a transitive power can be written
explicitly as
$$\mT\, X_n^r = \sum_{\ga\subseteq [n-1]}
(-1)^{|\ga|}X_n({\overline{\ga}})^r,$$
where $X_n(\ga)=\sum_{j\in\ga}(j\, n)$.

Our main result, Theorem~\ref{t2}, is that the transitive powers of $X_n,$ unlike powers,  
\emph{are} contained in $Z(n)$. Moreover, since  a basis for $Z(n)$ is
given by 
the set of all $\sfC_{\al}$  where $\sfC_{\al} =\sum_{\pi\in\sC_\al} \pi$, and $\sC_\al$ is
the conjugacy class of $\fS_n$ (naturally) indexed by the partition $\al$ of $n$,
then Theorem~\ref{t2} expresses $\mT\, X_n^r$ as
an explicit linear combination of the $\sfC_{\al}$.

We use the following notation and terminology for partitions.
If~$\al_1,\cd,\al_k$ are positive integers with~$1\le \al_1\le\cd\le\al_k$ and~$\al_1+\cd +\al_k=n$, 
then~$\al=(\al_1,\ld ,\al_k)$ is a \emph{partition} of $n$ with $k$ \emph{parts},
and we write $\al\vdash n$ and $l(\al )=k$, for $n,k\ge 0$. Let $\al\setminus\al_j$ denote
the partition obtained by removing the single part $\al_j$ from $\al$, for any $j=1,\ld ,k$.
Let $\al\cup m$  denote the partition obtained by inserting a single new part equal
to $m$ into $\al$ (placed in the appropriate ordered position).
Let $2\al=(2\al_2,\ld,2\al_k)$, and $a_{\al}=a_{\al_1}\cd a_{\al_k}$ for any
indeterminates $a_1,a_2,\ld$.
Let $\mcP$ denote the set of all partitions,
including the empty partition $\vep$, which it a partition of $0$ with $0$ parts.
If $\al$ has $f_j$ parts
equal to $j$ for each $j\ge 1$, then we also
use $(1^{f_1}2^{f_2}\cd )$ to denote $\al$, and we write  $|\Aut\al |=\prod_{j\ge 1} f_j!$ 

In the statement of our main result we use the notation $p_i \equiv p_i(\al)$ to denote
the $i$-th power sum of the parts of the partition $\al$, $i\ge 1$,
and $q_i\equiv q_i(\al):=p_i+p_1-2$, $i\ge2,$ and we define $\xi_{2j}$ and $\xi$ by
\begin{equation}\label{xidef}
\sum_{j\ge 1}\xi_{2j}x^{2j}:=\log\left(\xi(x)\right),\qquad\mbox{where}\qquad
\xi(x):= 2 x^{-1}\sh\left(\tf{1}{2}x\right), 
\end{equation}
where \emph{sh} and \emph{ch} denote, respectively, hyperbolic sine and cosine. 

\begin{thm}[Main Theorem]\label{t2}\label{t3}
For $r\ge 0$,  $\mT X_n^r$ is contained in the centre $Z(n)$ of $\mathbb{C}\fS_n$.
Moreover, the resolution of  $\mT X_n^r$ with respect to the class basis  of $Z(n)$ is
$$\mT X_n^r = \sum_{\substack{\al\vdash n,\\g\ge 0}} a_g(\al )\sfC_{\al}$$
where the range of summation on the right hand side is restricted by the condition $n+m-2+2g=r$,
with $m=l(\al)$, and $a_g(\al)$ is a polynomial in the parts of $\al$ given by
$$a_g(\al) =\f{1}{n!} {(n+m-2+2g)!} \, \al_1\cd\al_m\,Q_g(\al),\qquad\mbox{where}\qquad
Q_g:=\sum_{\be\vdash g}\f{\xi_{2\be}q_{2\be}}{|\Aut\be|}, \qquad g\ge 0.$$
\end{thm}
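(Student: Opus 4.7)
My plan is to prove the explicit formula first; centrality then follows for free, since the stated $a_g(\al)$ depends on $\al$ only through power sums of its parts, so the coefficient of any $\sigma\in\sC_\al$ in $\mT X_n^r$ must be constant across the class, placing $\mT X_n^r$ inside $Z(n)$.

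Concretely, I would encode the star-factorization numbers $H_r(\sigma):=[\sigma]\mT X_n^r$ in a generating series
$$F(\mathbf{p};z)=\sum_{n\ge 1}\sum_{r\ge n-1}\frac{z^r}{r!}\sum_{\sigma\in\fS_n}H_r(\sigma)\,p_{c(\sigma)},$$
where $p_{c(\sigma)}$ is the product of the $p_i$ indexed by the cycle lengths of $\sigma$. The inclusion-exclusion $\mT X_n^r=\sum_{\ga\subseteq[n-1]}(-1)^{|\ga|}X_n(\overline{\ga})^r$ from the introduction supplies the combinatorial handle on $H_r(\sigma)$. I then run a join-cut analysis on the leftmost factor of $\sigma=(a_1,n)(a_2,n)\cd(a_r,n)$: setting $\sigma'=(a_2,n)\cd(a_r,n)$, multiplication by $(a_1,n)$ either joins the cycle of $\sigma'$ containing $n$ with the one containing $a_1$, or cuts the $n$-cycle of $\sigma'$ at $a_1$. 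Translating these moves into differential operators in the $p_i$ (of the form $p_ip_j\,\partial/\partial p_{i+j}$ and $p_{i+j}\,\partial^2/\partial p_i\partial p_j$) and weighting so as to enforce surjectivity of $(a_1,\ld,a_r)$ onto $[n-1]$ yields a linear first-order PDE in $z$ for $F$.

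The technical core is then to verify that the ansatz suggested by the theorem, built from $Q_g=\sum_{\be\vdash g}\xi_{2\be}q_{2\be}/|\Aut\be|$, satisfies this PDE with the correct initial data. The main obstacle is the bookkeeping: the cut and join operators act on the parts of $\al$ and therefore mix the power sums $p_i$ with the shifted variables $q_i=p_i+p_1-2$; collapsing the resulting sums requires heavy use of the defining identity $\sum_j\xi_{2j}x^{2j}=\log\xi(x)$ for $\xi(x)=2x^{-1}\sh(x/2)$, equivalently the Weierstrass product for $\sinh$. For initial data I would check agreement with the known genus-zero counts of Pak (for $\al=(n)$) and Irving-Rattan (for general $\al$) at $g=0$, and verify a few small $(n,r)$ by direct computation. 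Once the formula is verified, centrality is immediate as noted above.
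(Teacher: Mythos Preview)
Your overall strategy---derive a join--cut PDE for a generating series of star-factorization numbers, then verify that the explicit formula built from the $\xi$-function satisfies it---is exactly the paper's. The paper's Theorem~\ref{jcut} is the join--cut equation, Theorem~\ref{gsallg} is the verification of the closed-form solution, and the proof of Theorem~\ref{t2} is the coefficient extraction; centrality is deduced at the end from the symmetry of the formula, just as you propose.

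There is, however, a real gap in your setup. Your series $F(\mathbf{p};z)$ records only the full cycle type $c(\sigma)$ via $p_{c(\sigma)}$, but the join--cut recursion is \emph{not} expressible as a differential operator in the $p_i$ alone. When you peel off the leftmost factor $(a_1,n)$, the effect on the cycle type of $\sigma'$ depends on whether $a_1$ lies in the cycle of $\sigma'$ containing $n$ or in some other cycle, and the resulting operator treats that distinguished cycle asymmetrically. Concretely, the paper must introduce \emph{two} families of indeterminates, $z_i$ marking the length of the cycle through the pivot and $y_j$ marking the other cycle lengths; the join--cut operator~(\ref{E_opsDd}) then has terms like $z_{i+1}\partial/\partial z_i$, $z_i y_j\,\partial/\partial z_{i+j}$, and $z_{i+j}\,\partial^2/\partial z_i\partial y_j$, all of which single out the pivot cycle. (There is also a third case you omit: when $(a_1,n)$ occurs only once, removing it drops the ground set from $[n]$ to $[n]\setminus\{a_1\}$, giving the $z_{i+1}\partial/\partial z_i$ term.) You cannot collapse to a PDE in the symmetric $p_i$ without already knowing that $H_r(\sigma)$ is class-constant---but that is precisely the centrality you want to deduce at the end. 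The symmetry in $\al_1,\ldots,\al_k,i$ is not visible in the PDE; it emerges only \emph{after} solving, when one observes that the coefficient extracted from the closed form $\Psi=Ze^Y$ happens to be symmetric in the pivot length $i$ and the remaining parts.

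So: keep your plan, but refine the generating series to track the pivot cycle separately, derive the asymmetric PDE, solve it (the closed form is $\Psi=Z\,e^Y$ with $Z,Y$ built from $\xi$), and only then observe the symmetry that yields centrality.
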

\vspace{.05in}

For example, the explicit expressions for small genera $g=0, \ldots, 5$ are
\begin{eqnarray*}
Q_0 &=& 1,\quad 
Q_1 = \f{1}{24}q_2,\quad 
Q_2 = \f{1}{5760}\left(-2q_4+5q_{2^2}\right),  \quad 
Q_3 = \f{1}{2^3 9!} \left(16q_6-42q_{4\, 2}+35q_{2^3}\right),  \\
Q_4 &=& \f{1}{3\cdot2^710!} \left(-144q_8+320q_{6\,2}+84q_{4^2}-420q_{4\,2^2}+175q_{2^4}\right), \\
Q_5 &=& \f{1}{3\cdot2^812!}
                 \left(768q_{10}-1584q_{8\,2}-704q_{6\,4}+1760q_{6\,2^2}+924q_{4^2\,2}-1540q_{4\,2^3}
                       +385q_{2^5}\right),
\end{eqnarray*}

\dmrjdel{
Q_6 &=& \f{1}{3^2 2^7 16!}
(-1061376 q_{12}+2096640 q_{10\, 2}+864864 q_{8\, 4}-2162160 q_{8\,2^2}
+366080 q_{6^2}\\
&&-1921920 q_{6\, 4\, 2}+1601600 q_{6\, 2^3}-168168 q_{4^3}+1261260 q_{4^2\, 2^2}
-1051050 q_{4\, 2^4}+175175 q_{2^6}),\\
Q_7&=&   \f{1}{3^3 2^{10} 16!}
(552960 q_{14}
-1061376 q_{12 \,2} 
-419328 q_{10 \,4}
+1048320 q_{10 \,2^2}
-329472 q_{8 \,6}  \\
&\mbox{}&
+864864 q_{8 \,4 \,2}
-720720 q_{8\,2^3}
+366080 q_{6^2 \,2}
+192192 q_{6\,4^2}
-960960 q_{6\,4\,2^2}
+400400 q_{6\,2^4} \\
&\mbox{}&
-168168 q_{4^3\,2} 
+420420 q_{4^2\,2^3}            
-210210 q_{4\,2^5} 
+25025 q_{2^7}), \\
Q_8 &=& \f{1}{5^{17}3^4 2^{16} 16!}
(-200005632 q_{16}
+376012800 q_{14 \,2}
+144347136 q_{12 \,4}
-360867840 q_{12 \,2^2}  \\
&\mbox{}& +108625920 q_{10 \,6}
-285143040 q_{10 \,4 \,2}
+237619200 q_{10 \,2^3}
+50409216 q_{8^2}
-224040960 q_{8 \,6 \,2}  \\
&\mbox{}& 
-58810752 q_{8 \,4^2}
+294053760 q_{8 \, 4 \,2^2}
-122522400 q_{8} q_{2}^4
-49786880 q_{6^2 \,4}
+124467200 q_{6^2\, 2^2}  \\
&\mbox{}& 
+130690560 q_{6 \,4^2 \,2}
-217817600 q_{6 \,4 \,2^3}
+54454400 q_{6 \,2^5}
+5717712 q_{4^4}
-57177120 q_{4^3 \,2^2}  \\
&\mbox{}& 
+71471400 q_{4^2\, 2^4}
-23823800 q_{4 \,2^6}
+2127125 q_{2^8}).
}


\subsection{Background}
\subsubsection{Minimal factorizations into star transpositions}

We now turn our attention temporarily to a another point of view.
The transpositions $(1\, a)$, for $a=2,\ld ,n$, are called {\em star} transpositions in~$\fS_n$,
with the distinguished element~$1$ (it appears in each transposition) referred to as the 
\emph{pivot} element.
An ordered factorization~$(\tau_1,\ld ,\tau_r)$ of~$\si\in\fS_n$ into  star transpositions is said to
be \emph{transitive} if the group generated by $\tau_1,\ld ,\tau_r$ acts transitively on~$[n].$
For a transitive factorization of $\si\in\sC_{\al}$ into $r$ star transpositions, a result in~\cite{gj0}
implies that~$r=n+m-2+2g$ for some non-negative integer~$g,$ where $\al$ has $m$ parts.
Thus~$r\ge n+m-2$, and we refer to transitive factorizations into~$n+m-2$ star transpositions
as \emph{minimal}.

Pak~\cite{p} enumerated minimal factorizations (he called them \emph{reduced decompositions})
into star transpositions for permutations fixing the pivot element~$1$, with
exactly~$m$ other cycles, each of length $k\ge 2$. 
More recently, Irving and Rattan~\cite{ir} 
generalized Pak's result by considering minimal factorizations of arbitrary permutations 
into star transpositions, and proved the following elegant result.

\begin{thm}[\cite{ir}] \label{t1}
For each permutation $\si\in\sC_{\al}$ with~$\al=(\al_1,\ld ,\al_m)$, $\al_1+\cd +\al_m=n$ and~$m,n\ge 1$, the
number of transitive factorizations of $\si$ into $n+m-2$ star transpositions  is
$$\f{(n+m-2)!}{n!}\,\al_1\cd\al_m.$$
\end{thm}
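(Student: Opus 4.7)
Since we may invoke Theorem~\ref{t2}, the plan is to deduce Theorem~\ref{t1} as its genus-zero specialization. The strategy has three ingredients: a combinatorial reading of the coefficients of $\mT X_n^r$, extraction of the $g=0$ class coefficient, and a conjugation argument that converts transpositions of the form $(i\,n)$ to honest star transpositions $(1\,a)$.

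First I would combinatorially interpret the coefficients of $\mT X_n^r$. Expanding $X_n^r=\bigl(\sum_{i=1}^{n-1}(i\,n)\bigr)^r$, the coefficient of $\pi\in\fS_n$ equals the number of ordered factorizations $\pi=\tau_1\cd\tau_r$ in which each $\tau_j$ has the form $(i_j\,n)$. Applying $\mT$ retains only those tuples whose factors generate a subgroup acting transitively on $[n]$, so the coefficient of $\pi$ in $\mT X_n^r$ is precisely the number of such transitive factorizations of $\pi$.

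Next I would specialize Theorem~\ref{t2} to $g=0$, which forces $r=n+m-2$. Because the only partition of $0$ is empty, $Q_0=1$, and the class coefficient becomes $a_0(\al)=\tf{(n+m-2)!}{n!}\,\al_1\cd\al_m$. Centrality of $\mT X_n^{n+m-2}$, also supplied by Theorem~\ref{t2}, then implies that \emph{every} $\pi\in\sC_\al$ carries this same coefficient, so every such $\pi$ admits exactly $a_0(\al)$ transitive factorizations into $n+m-2$ transpositions of the form $(i\,n)$. To transfer this count to honest star transpositions, I would conjugate by $\phi=(1\,n)$: this sends $(i\,n)\mapsto(\phi(i)\,1)$, and so yields a bijection on $r$-tuples that carries transitive factorizations of $\pi$ into transpositions of the form $(i\,n)$ to transitive factorizations of $\phi\pi\phi^{-1}$ into star transpositions. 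Since $\phi\pi\phi^{-1}\in\sC_\al$ whenever $\pi\in\sC_\al$ and $a_0(\al)$ depends only on $\al$, the number of transitive factorizations into $n+m-2$ star transpositions of any $\si\in\sC_\al$ is the same value $\tf{(n+m-2)!}{n!}\,\al_1\cd\al_m$, as claimed.

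The principal obstacle in this strategy lies not in the deduction, which is essentially formal, but in the Main Theorem itself — in particular, in proving the centrality statement and verifying the identification $Q_0=1$ from whatever recursive or generating-function framework underlies Theorem~\ref{t2}. Given Theorem~\ref{t2}, the symmetry in the parts of $\al$ observed by Irving and Rattan becomes a transparent consequence of centrality combined with the factored form $a_0(\al)=\tf{(n+m-2)!}{n!}\prod_j\al_j$, and no combinatorial bijection is required to see it.
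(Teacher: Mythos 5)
Your proposal is correct and matches the paper's own route: the paper likewise obtains Theorem~\ref{t1} as the $g=0$ case of Theorem~\ref{t2} (where $Q_0=1$ gives $a_0(\al)=\tfrac{(n+m-2)!}{n!}\,\al_1\cd\al_m$), with the change of pivot element justified by centrality, exactly as your conjugation argument makes explicit.
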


Because of the apparent asymmetry of these factorizations (\textit{i.e.}, the pivot element $1$ appears
in \emph{every factor}), the fact that Theorem~\ref{t1} is constant on
conjugacy classes is particularly surprising (we shall refer to this fact
as the \emph{centrality property} of Theorem~\ref{t1}). 
The proofs given in~\cite{p} and~\cite{ir} are bijective, involving restricted words and plane trees.

In terms of factorizations into star transpositions, the number $a_g(\al)$ given by the Main Theorem
clearly can be interpreted as the number of transitive factorizations of
each $\si\in\sC_{\al}$ into $n+m-2+2g$ star transpositions, with pivot element $n$.
We shall call $a_g(\al)$ a \emph{star factorization number}.
Thus Theorem~\ref{t1} is precisely the case $g=0$ of Theorem~\ref{t2} (the necessary
relabelling of the pivot element is justified by the centrality of these results).
The investigation described in this paper answers Pak's~\cite{p} question about an explicit expression
for the general case.  It was  motivated by Irving and Rattan's paper, in our
attempt to determine whether the centrality of their remarkable result for
star factorizations with a minimum number of factors persisted for star factorizations with
an \emph{arbitrary} number of factors.


\subsubsection{Connections with algebraic geometry}
The connection to algebraic geometry is made through Hurwitz's encoding~\cite{h}
of an $n$-sheeted branched cover of the sphere in terms of transpositions that represent
the sheet transitions at the elementary branch points.  In this context, the transitivity
of the factorizations corresponds to the connectedness of the cover.
From this perspective, the coefficient $a_g(\al)$ in Theorem~\ref{t2}  
counts genus $g$ branched covers of the sphere in which
the branching over the point $0$ is specified by $\al$, and there
are $n+m-2+2g$ other simple branch points, each of which corresponds to a transition between
sheet number $n$ (the \emph{pivot} sheet) and another sheet.
For the corresponding transitive factorizations into star transpositions, we therefore also refer
to $g$ as the \emph{genus of the factorization} (\emph{e.g.}, Theorem~\ref{t1}
counts genus $0$ factorizations). For further details about branched covers,
see, for example, ~\cite{gj0}, \cite{gjvai}, \cite{gjv} and \cite{h}.

The \emph{double Hurwitz number} $H^g_{(n),\al}$ is equal
to the number of genus $g$ branched covers
of the sphere in which the branching over
the points $0$ and $\infty$ is specified by $(n)$ and $\al$, respectively,
together with $m-1+2g$ other simple branch points. 
A scaling of this double Hurwitz number to 
$$b_g(\al):=\al_1\cd\al_m H^g_{(n),\al}$$
gives the number of transitive factorizations of each $\si\in\sfC_{\al}$ into $m-1+2g$ transpositions
and a single $n$-cycle. There is a striking similarity between Theorem~\ref{t2} and the following
result, in which the notation ${\widehat{q}}_i:=p_i-1$, $i\ge 1$ is used.

\begin{thm} [\cite{gjv}] \label{t2sim}
For $r\ge 0,$ the resolution of $\sfC_{(1^{n-2}\,2)}^r\sfC_{n}$  with respect to the class basis 
of the centre $Z(n)$ of $\mathbb{C}\fS_n$ is
$$\sfC_{(1^{n-2}\,2)}^r\sfC_{n}=\sum_{\substack{\al\vdash n,\\g\ge 0}} b_g(\al )\sfC_{\al},$$
where the range of summation on the right hand side is restricted by the condition $m-1+2g=r$,
with $m=l(\al)$, and $b_g(\al)$ is a polynomial in the parts of $\al$ given by
$$b_g(\al)=(m-1+2g)!\, n^{m-2+2g}\,\al_1\cd\al_m\,{\widehat{Q}}_g(\al),\qquad
\mbox{where}\qquad
{\widehat{Q}}_g:=\sum_{\be\vdash g}\f{\xi_{2\be}{\widehat{q}}_{2\be}}{|\Aut\be|}, \qquad g\ge 0.$$
\end{thm}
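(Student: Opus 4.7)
\textbf{Proof proposal for Theorem~\ref{t2sim}.} Since both $\sfC_{(1^{n-2}\,2)}$ and $\sfC_n$ are class sums, each lies in $Z(n)$, so the centrality of the product is immediate. For a fixed $\si\in\sC_{\al}$, the coefficient of $\si$ in $\sfC_{(1^{n-2}\,2)}^r\sfC_n$ counts ordered factorizations $\si=\tau_1\cd\tau_r\, c$ with each $\tau_i$ a transposition and $c$ an $n$-cycle. Because $c$ alone generates a transitive subgroup, every such factorization is automatically transitive, and a Riemann--Hurwitz count forces $r=m-1+2g$ for some $g\ge 0$. The standard Hurwitz scaling then identifies the coefficient of $\sfC_{\al}$ as $b_g(\al)=\al_1\cd\al_m H^g_{(n),\al}$, which gives the claimed combinatorial interpretation.

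To obtain the closed form I would pass to characters. Both generating class sums act as scalars on each irreducible $V^{\lambda}$: by a classical theorem of Jucys, $\sfC_{(1^{n-2}\,2)}$ acts by the content sum $c_{\lambda}=\sum_{\square\in\lambda}c(\square)$, while the Murnaghan--Nakayama rule forces the scalar of $\sfC_n$ to vanish unless $\lambda=(n-k,1^k)$ is a hook, in which case it equals $(-1)^k$. Fourier-inverting the resulting spectral description on the class basis yields
$$
b_g(\al)=\f{1}{n}\sum_{k=0}^{n-1}(-1)^k\,c_{(n-k,1^k)}^{\,r}\,\chi^{(n-k,1^k)}(\al),
$$
with $c_{(n-k,1^k)}=\binom{n-k}{2}-\binom{k+1}{2}$ and the hook characters admitting an elementary-symmetric-polynomial expression in the parts of $\al$ that accounts for the factor $\al_1\cd\al_m$.

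Polynomiality and the explicit form of $\widehat{Q}_g$ I would then extract by packaging the $b_g$ into a generating series $\Phi$ in the power-sum variables $p_i$ and deriving a join--cut partial differential equation: the leftmost transposition in a factorization either merges two cycles of the current product into one or splits a cycle into two, producing a PDE that is quadratic in $\Phi$ with coefficients involving $p_i$ and $\pa/\pa p_i$. Substituting the ansatz $b_g(\al)=(m-1+2g)!\,n^{m-2+2g}\al_1\cd\al_m\widehat{Q}_g(\al)$ and separating by genus reduces the PDE to a recursion on the $\widehat{Q}_g$; the generating function $\xi(x)=2x^{-1}\sh(x/2)$ enters because $\log\xi$ is precisely the exponential generating function encoding the hook-content asymptotics, and the shift $\widehat{q}_i=p_i-1$ reflects that one cycle of $\si$ (counted against the distinguished $n$-cycle on the other side) plays a different role from the others.

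The main obstacle is the explicit identification of $\widehat{Q}_g$ in the claimed form $\sum_{\be\vdash g}\xi_{2\be}\widehat{q}_{2\be}/|\Aut\be|$. Centrality, the combinatorial interpretation, and polynomiality in the parts $\al_i$ follow cleanly from the character-theoretic framework together with the PDE; however, matching the specific closed form involving $\log(2x^{-1}\sh(x/2))$ requires either a delicate asymptotic expansion of the hook-character sum above in powers of $1/n$, or a careful verification that the $\xi$-ansatz solves the join--cut recursion order by order in $g$. I would favour the PDE route, because its structure parallels (and indeed motivates) the method used to prove the main theorem of the present paper.
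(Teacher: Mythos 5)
You should first note how the paper itself treats Theorem~\ref{t2sim}: it gives no proof at all, but quotes the result from \cite{gjv}, adding only the remark that $\sfC_{(1^{n-2}\,2)}^r\sfC_{n}=\mT(\sfC_{(1^{n-2}\,2)}^r\sfC_{n})$ because every term of $\sfC_{n}$ already acts transitively. Your proposal is therefore an attempt to reconstruct the proof in the cited source, and its skeleton does match that source's method: centrality is trivial since both factors are class sums; the coefficient of $\sfC_{\al}$ counts the factorizations defining $\al_1\cd\al_m H^g_{(n),\al}$; and the spectral decomposition over irreducibles, where the $n$-cycle class sum kills all non-hook shapes, leads to your inversion formula $b_g(\al)=\f{1}{n}\sum_{k=0}^{n-1}(-1)^k c_{(n-k,1^k)}^{\,r}\chi^{(n-k,1^k)}(\al)$, which is correct. (Minor slip: the scalar by which $\sfC_{n}$ acts on the hook module indexed by $(n-k,1^k)$ is $(-1)^k(n-1)!/\binom{n-1}{k}=(-1)^k\,k!\,(n-1-k)!$, not $(-1)^k$; the latter is the character value. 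Your displayed formula has the right normalization anyway.)

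The genuine gap is that you stop exactly where the content of the theorem lies --- the closed form --- and both routes you offer for closing it are misdirected. No asymptotic expansion in $1/n$ is involved, and no join-cut PDE is needed (that machinery is what the present paper develops for its Main Theorem, not for this one): the hook sum can be evaluated exactly. Since $c_{(n-k,1^k)}=\binom{n-k}{2}-\binom{k+1}{2}=\tf{n}{2}(n-1-2k)$, form the exponential generating function over $r$; everything reduces to the identity
$$\sum_{k=0}^{n-1}(-1)^k e^{(n-1-2k)z}\,\chi^{(n-k,1^k)}(\al)=\f{\prod_{i=1}^{m} 2\sh(\al_i z)}{2\sh(z)},$$
which follows by applying to $p_\al=\sum_\lambda\chi^\lambda(\al)s_\lambda$ the specialization $p_j\mapsto 2\sh(jz)$ (one ordinary and one anti-symmetric variable): it kills every non-hook Schur function and sends $s_{(n-k,1^k)}$ to $(-1)^k e^{(n-1-2k)z}\cdot 2\sh(z)$. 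Rewriting the right-hand side via $\xi$ as $(2z)^{m-1}\al_1\cd\al_m\,\prod_{i}\xi(2\al_i z)/\xi(2z)$ and extracting the coefficient of $z^{m-1+2g}$ gives $b_g(\al)=r!\,n^{r-1}\,\al_1\cd\al_m\,[x^{2g}]\prod_i\xi(\al_i x)/\xi(x)$ with $r=m-1+2g$, and $[x^{2g}]\prod_i\xi(\al_i x)/\xi(x)={\widehat{Q}}_g(\al)$ is immediate from~(\ref{xidef}) together with ${\widehat{q}}_{2j}=p_{2j}-1$. Without this (or an equivalent) exact evaluation, your write-up establishes centrality, the Hurwitz interpretation, and the parity constraint $r=m-1+2g$, but not the formula itself, which is the substance of the theorem.
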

\vspace{.05in}

This is a restatement of Theorem~3.1 in~\cite{gjv}, which gives a
formula for the double Hurwitz number $H^g_{(n),\al},$ since
$\sfC_{1^{n-2}2}^r\sfC_{n} =\mT(\sfC_{1^{n-2}2}^r\sfC_{n})$ (each term in 
$\sfC_{n}$ acts transitively on $[n]$).

\subsubsection{Two relationships between  Theorems~\ref{t2} and~\ref{t2sim}}
To explore a more direct relationship between Theorems~\ref{t2} and~\ref{t2sim},
we now give two expressions for $a_g(\al)$ in terms of the $b_h(\ga)$'s.

The first  is  very simple and expresses $a_g(\al),$ which
enumerates factorizations in $\fS_n,$ directly in terms of $b_g(\al\cup 1^{n-1}),$ which
enumerates factorizations in $\fS_{2n-1}.$

\begin{cor}\label{connS2n-1}
For $g\ge 0$ and $\al$ a partition of $n$ with $m$ parts, we have
$$a_g(\al)=\f{1}{n!(2n-1)^{n+m-3+2g}}b_g(\al\cup 1^{n-1}).$$
\end{cor}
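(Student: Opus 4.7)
The plan is to prove the corollary by direct substitution into the explicit formulas supplied by Theorems~\ref{t2} and~\ref{t2sim}. Set $\ga:=\al\cup 1^{n-1}$; then $\ga\vdash 2n-1$, with $l(\ga)=m+n-1$ parts, and since the appended parts are all ones, the product of the parts of $\ga$ is again $\al_1\cd\al_m$. This takes care of all the ``outer'' factors in the formula for $b_g(\ga)$: the factorial becomes $(l(\ga)-1+2g)!=(n+m-2+2g)!$, and $|\ga|^{l(\ga)-2+2g}=(2n-1)^{n+m-3+2g}$, which matches the stated denominator exactly.

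The only non-trivial step is to identify the polynomial factor $\widehat{Q}_g(\ga)$ appearing in $b_g(\ga)$ with the factor $Q_g(\al)$ appearing in $a_g(\al)$. For this I would compute the power sums: since the $n-1$ appended parts are equal to $1$,
$$p_i(\ga)=p_i(\al)+(n-1),\qquad i\ge 1.$$
Using $p_1(\al)=n$, this gives
$$\widehat{q}_i(\ga)=p_i(\ga)-1=p_i(\al)+n-2=p_i(\al)+p_1(\al)-2=q_i(\al),$$
so $\widehat{q}_{2\be}(\ga)=q_{2\be}(\al)$ for every partition $\be$, and hence
$$\widehat{Q}_g(\ga)=\sum_{\be\vdash g}\f{\xi_{2\be}\widehat{q}_{2\be}(\ga)}{|\Aut\be|}=\sum_{\be\vdash g}\f{\xi_{2\be}q_{2\be}(\al)}{|\Aut\be|}=Q_g(\al).$$

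Combining the pieces, Theorem~\ref{t2sim} yields
$$b_g(\ga)=(n+m-2+2g)!\,(2n-1)^{n+m-3+2g}\,\al_1\cd\al_m\,Q_g(\al),$$
and dividing by $n!(2n-1)^{n+m-3+2g}$ gives exactly the formula for $a_g(\al)$ provided by Theorem~\ref{t2}. There is no real obstacle here: once the bookkeeping of parts and power sums is set up, the corollary is a direct algebraic identity between the two closed forms, driven by the coincidence $\widehat{q}_i(\al\cup 1^{n-1})=q_i(\al)$.
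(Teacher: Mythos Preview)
Your proof is correct and follows essentially the same approach as the paper's own proof: the key observation in both is that $\widehat{q}_i(\al\cup 1^{n-1})=q_i(\al)$, whence $\widehat{Q}_g(\al\cup 1^{n-1})=Q_g(\al)$, and then the result is an immediate comparison of the closed forms in Theorems~\ref{t2} and~\ref{t2sim}. You have simply made explicit the bookkeeping (the sizes $|\ga|=2n-1$, $l(\ga)=m+n-1$, the product of parts, and the factorial) that the paper leaves to the reader.
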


\bpf
In the notation of Theorems~\ref{t2} and~\ref{t2sim},
clearly $q_i(\al)={\widehat{q}}_i(\al\cup 1^{n-1})$,
so $Q_g(\al)={\widehat{Q}}_g(\al\cup 1^{n-1})$. The
result follows immediately from Theorems~\ref{t2} and~\ref{t2sim}.
\epf

The second expresses $a_g(\al)$ as a linear combination of $b_{g-h}(\al)$, $0\le h\le g$, 
each of which enumerates factorizations in $\fS_n$. 

\begin{cor}\label{connSn}
For $g\ge 0$ and $\al,$ a partition of $n$ with $m$ parts, we have
$$a_g(\al)=\f{1}{n!}
\sum_{h=0}^g\f{b_{g-h}(\al)}{n^{m-2+2g-2h}}
\binom{n+m-2+2g}{n-1+2h}
\sum_{j=0}^{n-1}\binom{n-1}{j}(-1)^j
\left(\tf{1}{2}(n-1)-j\right)^{n-1+2h}.$$
\end{cor}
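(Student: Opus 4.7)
The plan is to compare the definitions of $Q_g$ and $\widehat{Q}_g$ by passing to a generating series in an auxiliary variable $z$. The expression
$$Q_g=\sum_{\be\vdash g}\f{\xi_{2\be}q_{2\be}}{|\Aut\be|}$$
is of ``exponential formula'' form (with each part $i$ of $\be$ contributing a factor $\xi_{2i}q_{2i}$), and similarly for $\widehat{Q}_g$, so
$$\sum_{g\ge 0}Q_g z^g=\exp\left(\sum_{i\ge 1}\xi_{2i}q_{2i}z^i\right),\qquad \sum_{g\ge 0}\widehat{Q}_g z^g=\exp\left(\sum_{i\ge 1}\xi_{2i}\widehat{q}_{2i}z^i\right).$$
Since $\al\vdash n$ satisfies $p_1(\al)=n$, one has $q_i(\al)-\widehat{q}_i(\al)=p_1-1=n-1$, and so by~\eqref{xidef}
$$\sum_{g\ge 0}Q_g z^g=\left(\sum_{g\ge 0}\widehat{Q}_g z^g\right)\exp\left((n-1)\sum_{i\ge 1}\xi_{2i}z^i\right)=\left(\sum_{g\ge 0}\widehat{Q}_g z^g\right)\xi(\sqrt z)^{n-1}.$$

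The next step is to expand $\xi(\sqrt z)^{n-1}$ explicitly in $z$. Writing $\xi(x)=x^{-1}(e^{x/2}-e^{-x/2})$ and applying the binomial theorem yields
$$\xi(x)^{n-1}=x^{-(n-1)}\sum_{j=0}^{n-1}\binom{n-1}{j}(-1)^j e^{(\frac{n-1}{2}-j)x}.$$
Since $\xi$ is even in $x$, only even powers of $x$ survive (the odd powers cancel under $j\mapsto n-1-j$), and extracting the coefficient of $x^{2h}$ gives
$$[z^h]\,\xi(\sqrt z)^{n-1}=\f{1}{(n-1+2h)!}\sum_{j=0}^{n-1}\binom{n-1}{j}(-1)^j\left(\tf{1}{2}(n-1)-j\right)^{n-1+2h}.$$
Consequently $Q_g(\al)=\sum_{h=0}^g \widehat{Q}_{g-h}(\al)\cdot[z^h]\xi(\sqrt z)^{n-1}$.

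Finally, substitute this convolution into the formula for $a_g(\al)$ from Theorem~\ref{t2}, and replace each $\widehat{Q}_{g-h}(\al)$ by $b_{g-h}(\al)/\bigl((m-1+2g-2h)!\,n^{m-2+2g-2h}\al_1\cd\al_m\bigr)$ using Theorem~\ref{t2sim}. The product of factorials collapses via
$$\f{(n+m-2+2g)!}{(n-1+2h)!\,(m-1+2g-2h)!}=\binom{n+m-2+2g}{n-1+2h},$$
and the $\al_1\cd\al_m$ cancels, giving exactly the stated expression.

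No step here is a genuine obstacle: the whole argument is essentially a generating-function identity. The one place requiring a little care is the coefficient extraction from $\xi(x)^{n-1}$, specifically checking that only even powers of $x$ appear so that the substitution $x^2=z$ is clean; everything else is formal bookkeeping of factorials and $n$-powers.
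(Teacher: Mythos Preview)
Your proof is correct and follows essentially the same route as the paper's own argument: both use the exponential-formula identity $\sum_g Q_g x^{2g}=\xi(x)^{n-1}\sum_g\widehat{Q}_g x^{2g}$ (you write it with $z=x^2$, which is cosmetically different but identical in content), then extract $[x^{2h}]\xi(x)^{n-1}$ via the binomial expansion of $(e^{x/2}-e^{-x/2})^{n-1}$, and finish by substituting the formulas from Theorems~\ref{t2} and~\ref{t2sim} so that the factorials combine into the binomial coefficient.
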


\bpf
In the notation of Theorems~\ref{t2} and~\ref{t2sim},
clearly $q_i(\al)={\widehat{q}}_i(\al)+n-1$. Then
from Theorems~\ref{t2} and~\ref{t2sim}, and~(\ref{xidef}), we have
\begin{equation}
\sum_{g\ge 0}Q_g(\al) x^{2g}=
\exp\left(\sum_{j\ge 1}\xi_{2j}q_{2j}(\al)x^{2j}\right)
=\xi(x)^{n-1}\sum_{g\ge 0}{\widehat{Q}}_g(\al) x^{2g}.\label{QQhat}
\end{equation}
But, for $h\ge 0$, we have (using the notation $[A]B$ to denote the
\emph{coefficient} of $A$ in $B$)
\begin{equation*}
[x^{2h}]\xi(x)^{n-1}=[x^{n-1+2h}]\left( e^{\f{x}{2}}-e^{-\f{x}{2}}\right)^{n-1}
=\sum_{j=0}^{n-1}\binom{n-1}{j}(-1)^j\f{\left(\tf{1}{2}(n-1)-j\right)^{n-1+2h}}{(n-1+2h)!}
\end{equation*}
and, together with~(\ref{QQhat}), this gives
$$Q_g(\al)=\sum_{h=0}^g\f{{\widehat{Q}}_{g-h}(\al)}{(n-1+2h)!}
\sum_{j=0}^{n-1}\binom{n-1}{j}(-1)^j
\left(\tf{1}{2}(n-1)-j\right)^{n-1+2h}.$$
The result follows immediately from Theorems~\ref{t2} and~\ref{t2sim}.
\epf

\dmrjdel{
We note that, in Theorem~\ref{t2}, the divisors~$24,$ $5760$  and~$2^3\,9!$
that appear, respectively,  in the  genus~$1,$ ~$2$ and~$3$ counting formulae, 
also appear  in the {\em Hurwitz numbers} counting genus~$1,$ ~$2$ and~$3$ ramified covers 
of the sphere. The reader is referred to~[\cite{gj1},\cite{gj2},\cite{gjv}], for
example,  for the appearance of these
divisors  in these Hurwitz numbers, for which the factors are \emph{arbitrary} transpositions. 
}

\subsection{Outline}

In Section 2, we introduce a generating series for the number of transitive
factorizations into star transpositions in arbitrary genus, and prove that
it is the unique formal power series solution of a linear partial differential
equation that we call the {\em Join-cut Equation} for this class of
factorizations. 
The proof is based on
a join-cut analysis of these factorizations, since the left-most factor $\si$
either joins two cycles of the product $\pi$ of the remaining factors to form one cycle
or cuts one
cycle of $\pi$ into two, depending on whether the two elements moved by $\si$
are, respectively, in different cycles of $\pi,$ or in the same cycle.
This approach has been applied previously where the factors are arbitrary
transpositions, for the genus $0$ case in~\cite{gj0}, and for arbitrary
genus in~\cite{gjvai} and \cite{gjv}.

In Section 3, we solve the Join-cut Equation to obtain the generating series
for transitive factorizations into star transpositions in arbitrary genus.
Then, by determining the coefficients in this generating series, we prove
Theorem~\ref{t2} (and hence also give a new proof of Theorem~\ref{t1}).

In Section 4, we pose some questions that arise from
this investigation, but that we have been unable to resolve.

\section{The Join-cut Equation}

Let $\fS_{\mcA}$ denote the symmetric group on an arbitrary set $\mcA$. 
For an arbitrary set $\mcA$ of size $n$ containing $1$ (for convenience,
we shall consider star transpositions with pivot element $1$),
let $\sC^{(i)}_{\al}$ denote the set of all permutations in $\fS_{\mcA}$ in
which $1$ lies on a cycle of length $i$ and the remaining
cycle-lengths in the disjoint cycle representation are given by
the parts of $\al$, where $\al\vdash n-i$, for $n\ge i\ge 1$.
It is straightforward to determine, independently of the choice of $\mcA$,  that
\begin{equation}\label{cardiclass}
|\sC^{(i)}_{\al}|={n-1\choose i-1}(i-1)!|\sC_{\al}|=
\f{(n-1)!}{\al_1\cd \al_k |\Aut\al |},
\end{equation}
where $\al=(\al_1,\ld ,\al_k)$.
Consider a fixed  permutation $\si\in\sC^{(i)}_{\al}$ in $\fS_n$, and 
let $c_g(i,\al )$ be the number of transitive factorizations
of $\si$ into $n+k-1+2g$ star transpositions (this number
is constant for each such $\si$ because of the symmetry of
elements $2,\ld ,n$; note that $\si$ lies in the conjugacy
class~$\sC_{\al\cup i}$, which has $m=k+1$ cycles).
Let $\Psi$ denote the generating series 
\begin{equation}\label{gendef}
\Psi(t,u,x;\bfz,\bfy):=\sum_{\substack{n\ge i\ge 1, \\k,g\ge 0}}
n t^n \f{u^{n+k-1+2g}}{(n+k-1+2g)!} x^{2g} z_i
\sum_{\substack{\al\in\mcP\\ \al\vdash n-i,\\l(\al)=k}}
|\sC^{(i)}_{\al}|c_g(i,\al ) y_{\al}.
\end{equation}
The following result is the Join-cut Equation for the set of transitive
factorizations into star transpositions. It states that
$\Psi$ is annihilated by the partial differential operator
\begin{equation}\label{E_opsDd}
\De:=\f{\pa}{\pa u}-t\f{\pa}{\pa t}t\sum_{i\ge 1}z_{i+1}\f{\pa}{\pa z_i}
-\sum_{i,j\ge 1}z_i y_j \f{\pa}{\pa z_{i+j}}
-x^2\sum_{i,j\geq 1}jz_{i+j}\frac{\partial ^{2}}{\partial z_{i}\partial
y_{j}}.
\end{equation}

\begin{thm}[Join-cut Equation]\label{jcut}
The generating series $\Psi=\Psi(t,u,x;\bfz,\bfy)$ is the unique formal power series solution
of $\De \Psi =0$,
with initial condition $\Psi(t,0,x;\bfz,\bfy)=z_1 t$.
\end{thm}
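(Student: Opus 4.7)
I would prove the theorem in two stages. \emph{Uniqueness} follows from the observation that, among the four summands of $\De$, only $\pa/\pa u$ contains a $u$-derivative. Rewriting $\De\Psi=0$ as $\pa\Psi/\pa u = L\Psi$, where $L$ is the $u$-free operator built from the last three summands of $\De$, a recursion on the $u$-power determines every coefficient of $\Psi$ uniquely from the initial condition $\Psi(t,0,x;\bfz,\bfy)=z_1 t$.

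For \emph{existence}, I would show that $\Psi$ satisfies $\De\Psi=0$ by a join-cut analysis on the leftmost factor. Given a transitive factorization $(\tau_1,\ld,\tau_r)$ of $\si\in\sC^{(i)}_\al$, write $\tau_1=(1\,a)$ and $\pi=(1\,a)\si$, so that $(\tau_2,\ld,\tau_r)$ is a factorization of $\pi$ with one fewer factor. The key preliminary observation is that the orbits of $\langle\tau_2,\ld,\tau_r\rangle$ on $[n]$ consist of one distinguished orbit $O_1\ni 1$ (namely $\{1\}$ together with the non-$1$ endpoints of the $\tau_k$ for $k\ge 2$) and singletons; combined with the transitivity of $(\tau_1,\ld,\tau_r)$, this forces exactly one of three cases. (J) \emph{Join}: $a$ lies in a non-pivot cycle of $\si$ of length $j$, so $\pi$'s pivot cycle has length $i+j$ and $(\tau_2,\ld,\tau_r)$ is automatically transitive on $[n]$. (T) \emph{Transitive cut}: $a$ lies in $\si$'s pivot cycle of length $i+j$, which splits into $\pi$'s pivot cycle of length $i$ and a new non-pivot cycle of length $j$, with $(\tau_2,\ld,\tau_r)$ still transitive on $[n]$. (P) \emph{Pendant cut}: $a$ lies in $\si$'s pivot cycle of length $i+1$ but $(\tau_2,\ld,\tau_r)$ is not transitive on $[n]$, in which case $a$ is forced to be fixed by $\pi$ and to arise as a detached singleton cut from the pivot cycle, so that after relabelling $(\tau_2,\ld,\tau_r)$ becomes a transitive factorization of an element of $\sC^{(i)}_\al$ in $\fS_{n-1}$.

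I would then match the three cases to the three remaining summands of $\De$. Case (J) contributes $\sum_{i,j}z_iy_j\,\pa/\pa z_{i+j}$, where $\pa/\pa z_{i+j}$ marks $\pi$'s pivot cycle and $z_iy_j$ records the split seen in $\si$. Case (T) contributes $x^2\sum_{i,j}jz_{i+j}\,\pa^2/(\pa z_i\,\pa y_j)$, the multiplicity $j$ counting the positions of $a$ within the new non-pivot $y_j$-cycle of $\pi$, and the $x^2$ recording the genus decrement forced by the factor-count identity $n+m-2+2g=r$. Case (P) contributes $t\,\pa_t\,t\sum_i z_{i+1}\,\pa/\pa z_i$, the outer $t\,\pa_t\,t$ encoding the introduction of the new label $a\in[n]\setminus\{1\}$ together with the $n$-prefactor in the definition of $\Psi$. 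The main obstacle I anticipate is keeping the combinatorial bookkeeping straight, in particular matching the $n$-prefactor and the normalisation $|\sC^{(i)}_\al|$ in $\Psi$ against the cardinalities of the three bijections; the pendant-cut case (P) is the trickiest, as one must identify a star-transposition factorization on the residual orbit $[n]\setminus\{a\}$ with a transitive factorization in $\fS_{n-1}$ after relabelling, and check that the operator $t\,\pa_t\,t$ precisely absorbs both the shift in $n$ and the $n-1$ choices of pendant label.
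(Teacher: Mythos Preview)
Your approach is essentially the paper's: the same join--cut analysis on the leftmost star transposition, with your cases (J), (T), (P) corresponding exactly to the paper's Subcase~2(a), Subcase~2(b), and Case~1, and the same matching of these to the three non-$\pa/\pa u$ summands of $\De$. The only blemish is a harmless indexing slip---having fixed $\si\in\sC^{(i)}_\al$, the pivot cycle of $\si$ has length $i$, not $i+j$ or $i+1$ as you write in cases (T) and (P); once you reindex consistently (so that, e.g., in (T) the pivot of $\si$ has length $i$ and splits in $\pi$ into lengths $i-m$ and $m$), the bookkeeping you anticipate goes through exactly as in the paper.
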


\bpf
Fix a triple $(k,g,i)$ of integers with $k,g\ge 0$ and $i\ge1$ to be other than~$(0,0,1).$ Also fix 
a partition $\al$ with $l(\al )=k$ and a permutation $\si\in\sC^{(i)}_{\al}$ in $\fS_n$,
where $\al\vdash n-i$. Consider a transitive factorization $(\tau_1,\ld ,\tau_r)$ of $\si$ into
star transpositions, where $r=n+k-1+2g$.
For this factorization, we let $\pi=\tau_2\cd \tau_r=\tau_1\si$, and $\tau_1=(1\, a)$.
There are $c_g(i,\al )$ such factorizations of $\si$, and we obtain a recurrence equation
for $c_g(i,\al )$ by considering the following case analysis for these
factorizations which is based on the left-most factor $\tau_1$.
\vspace{.05in}

\noindent
\underline{{\bf Case 1:}} $\tau_1\ne\tau_j$ for any $j=2,\ld ,r$. In this case, the
element $a$ is a fixed point in $\pi$, and $(\tau_2,\ld ,\tau_r)$ is
{\em not} a transitive factorization of $\pi$. But, if we
let $\pi'\in\fS_{[n]\setminus\{ a\}}$, whose
disjoint cycle representation is obtained by removing the one-cycle containing $a$ from
the disjoint cycles of $\pi$, then $(\tau_2,\ld ,\tau_r)$ is
a transitive factorization of $\pi'$.
But $\si$ is obtained from $\pi'$ by inserting $a$ immediately
before $1$ in the cycle of $\pi'$ containing $1$. This implies
that $\pi'\in\sC^{(i-1)}_{\al}$ in $\fS_{[n]\setminus\{ a\}}$.
Note that the transitive
factorization $(\tau_2,\ld ,\tau_r)$ of $\pi'$ has $r-1=(n-1)+k-1+2g$ factors and that
this is reversible, so
we conclude that the number of such factorizations is $c_g(i-1,\al )$,
the contribution from this case.
\vspace{.05in}

\noindent
\underline{{\bf Case 2:}} $\tau_1 =\tau_j$ for some $j=2,\ld ,r$. In this case, $(\tau_2,\ld ,\tau_r)$ is
a transitive factorization of $\pi$, since for a product of star transpositions in $\fS_n$ to
be transitive, it is necessary and sufficient that each of $(1\, 2),\ld ,(1\, n)$ appears
at least once as a factor (as observed in~\cite{ir}). There are two subcases, based
on which disjoint cycles of $\pi$ contain elements $1$ and $a$.
\vspace{.05in}

\noindent
\qquad{\bf Subcase 2(a):} $1$ and $a$ appear on the same cycle of $\pi$. In this subcase,
that cycle of $\pi$ is {\em cut} into two cycles in $\si$, one containing $1$,
and the other containing $a$. Consequently, for each
factorization $(\tau_1,\ld ,\tau_r)$ of $\si$, we obtain a factorization of $\pi$
 in this subcase by selecting $a$ to be any element on the $k$ cycles of $\si$ not
containing $1$. We account for the choices of $a$ on these cycles as follows: Suppose
the cycles are indexed so they have lengths $\al_1,\ld ,\al_k$
(the cycles are all non-empty, so they are distinguishable, even if their lengths are equal).
If $a$ is on the $j$th such cycle, of length $\al_j$, then there are $\al_j$ choices
of $a$, and the cycle of $\pi$ containing $1$ has length $i+\al_j$,
so we have $\pi\in\sC^{(i+\al_j)}_{\al\setminus\al_j}$ in $\fS_n$. Since
the transitive factorization $(\tau_2,\ld ,\tau_r)$ of $\pi$ has $r-1=n+(k-1)-1+2g$ factors
and this is reversible,
we conclude that there are $c_g(i+\al_j,\al\setminus\al_j )$ such factorizations, giving a
total contribution from this subcase of $\sum_{j=1}^k \al_j c_g(i+\al_j,\al\setminus \al_j )$.
\vspace{.05in}

\noindent
\qquad{\bf Subcase 2(b):} $1$ and $a$ appear on different cycles of $\pi$. In this subcase,
these cycles of $\pi$ are {\em joined} into a single cycle of $\si$, containing
both $1$ and $a$.  Consequently, for each
factorization $(\tau_1,\ld ,\tau_r)$ of $\si$, we obtain a factorization of $\pi$
in this subcase by selecting $a$ to be any other element on the cycle
of $\si$ containing $1$. We account for these $i-1$ choices of $a$ as
follows: Suppose that the cycle of $\si$ containing $1$, in cyclic order,
is $(1\, j_{i-1}\, \ld j_{1})$  (\textit{i.e.,} so $\si(1)=j_{i-1}$, $\si(j_{t})=j_{t-1}$,
for $t=2,\ld ,i-1$, and $\si(j_{1})=1$).  If $a=j_m$, then $\pi$ has disjoint
cycles $(1\, j_{i-1}\, \ld j_{m+1})$ (containing $1$) and $(j_{m}\, \ld j_{1})$, together with all
the cycles of $\si$ not containing $1$, so we 
have $\pi\in\sC^{(i-m)}_{\al\cup m}$ in $\fS_n$, and
the transitive factorization $(\tau_2,\ld ,\tau_r)$ of $\pi$ has $r-1=n+(k+1)-1+2(g-1)$ factors.
Since this is reversible,
we conclude that there are $c_{g-1}(i-m,\al\cup m)$ such factorizations, giving
a total contribution from this subcase of $\sum_{m = 1}^{i-1} c_{g-1}(i-m,\al\cup m)$.
\vspace{.05in}

Adding together the contributions from these disjoint cases, we obtain 
the linear recurrence equation
$$
c_g(i,\al )=c_g(i-1,\al )+\sum_{j=1}^k \al_j c_g(i+\al_j,\al\setminus \al_j )
+\sum_{m = 1}^{i-1} c_{g-1}(i-m,\al\cup m),
$$
for $k,g\ge 0$, $i\ge 1$ (except
for the simultaneous choices $k=g=0$ {\em and} $i=1$) and $\al$ with $l(\al)=k$.
The partial differential equation follows by multiplying this recurrence equation
by $n t^n \f{u^{n+k-2+2g}}{(n+k-2+2g)!} x^g z_i
|\sC^{(i)}_{\al}|y_{\al}$, and summing over the above range of $k,g,i,\al$.

The initial condition follows from the fact that there is a single, empty
factorization with no factors, of the single permutation (with $1$ as a fixed
point) in $\fS_1$. Thus we have $c_0(1,\vep)=1$.
\epf


\section{A proof of Theorem~\ref{t2}}

\subsection{An explicit solution to the Join-cut Equation}

The next result gives the explicit solution of the Join-cut Equation in terms of the
series $\xi$ defined in~(\ref{xidef}) and $W\equiv W(t,u,x: \bfz)$ where
$$W:= \sum_{\ell \ge 1} z_{\ell}\xi(\ell ux) \xi(ux)^{\ell -2} u^{\ell-1}t^{\ell}.$$

\begin{thm}\label{gsallg}
Let
$Z:=t\f{\partial}{\partial t} W(t,u,x: \bfz) \quad\mbox{and}\quad 
Y:= \xi (ux)^2u^2W(t,u,x: \bfy).$
Then
$\Psi = Ze^Y.$
\dmrjdel{
$$\Psi= \Big(\sum_{\ell \ge 1} \ell z_{\ell}\xi(\ell ux) \xi(ux)^{\ell -2}
u^{\ell-1}t^{\ell}\Big)\cdot
\exp\!\Big(\sum_{m \ge 1}  y_m \xi(mux) \xi(ux)^m u^{m+1}t^m\Big).$$}
\end{thm}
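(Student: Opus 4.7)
The strategy is to invoke the uniqueness guaranteed by Theorem~\ref{jcut}: it suffices to verify that the candidate $\Psi^\star:=Ze^Y$ satisfies both $\De\Psi^\star=0$ and the initial condition $\Psi^\star(t,0,x;\bfz,\bfy)=z_1t$. The initial condition is quick. From~(\ref{xidef}) one has $\xi(0)=1$, so every summand of $W$ with $\ell\ge 2$ carries a factor $u^{\ell-1}$ that vanishes at $u=0$, leaving $W|_{u=0}=z_1t$. Consequently $Z|_{u=0}=t\pa_t(z_1t)=z_1t$, $Y|_{u=0}=0$, and $\Psi^\star|_{u=0}=z_1t$, as required.

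To verify the PDE, I would write $W(t,u,x;\bfz)=\sum_{i\ge 1}z_iA_it^i$ with $A_i(u,x):=\xi(iux)\xi(ux)^{i-2}u^{i-1}$, and $Y=\sum_{j\ge 1}y_jB_jt^j$ with $B_j(u,x):=\xi(jux)\xi(ux)^ju^{j+1}$. Then $Z=\sum_i iz_iA_it^i$, $\pa_{z_i}Z=iA_it^i$, and $\pa_{y_j}Y=B_jt^j$ are read off directly. Since $e^Y$ is independent of $\bfz$, the first- and second-order shift operators in $\De$ act on $Ze^Y$ by the Leibniz rule, producing pieces acting on $Z$ alone (re-indexing $A_\ell\mapsto A_{\ell-1}$ for the shift $\sum_i z_{i+1}\pa_{z_i}$), pieces acting on $Y$ alone through $e^Y$, and mixed pieces of the form $(\pa_{z_i}Z)(\pa_{y_j}Y)$ arising from the $x^2$-term. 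Extracting the common factor $e^Y$, the equation $\De\Psi^\star=0$ reduces, after re-indexing, to the identity
\begin{equation*}
 \pa_u Z+Z\,\pa_u Y
 =t\pa_t\!\Bigl(t\sum_i z_{i+1}\pa_{z_i}Z\Bigr)+\sum_{i,j\ge 1}z_iy_j\pa_{z_{i+j}}Z
 +x^2\!\sum_{i,j\ge 1}jz_{i+j}\bigl(\pa_{z_i}\pa_{y_j}+\pa_{z_i}\cdot\pa_{y_j}\bigr)(Ze^Y)/e^Y,
\end{equation*}
which when expanded in the monomial basis $z_\al y_\be t^r$ becomes a finite list of analytic identities between the $A_\ell$, $B_m$ and their $u$-derivatives.

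All of these identities are consequences of two elementary facts about $\xi(z)=2z^{-1}\sh(\tf12 z)$. The first is the derivative relation
\begin{equation*}
 z\,\xi'(z)=\ch(\tf12 z)-\xi(z),
\end{equation*}
obtained by differentiating the definition, which controls $\pa_u A_\ell$ and $\pa_u B_m$. The second is the \emph{join-cut identity}
\begin{equation*}
 (a+b)\,\xi((a+b)z)=a\,\xi(az)\,\ch(\tf12 bz)+b\,\xi(bz)\,\ch(\tf12 az),
\end{equation*}
which is merely the hyperbolic addition formula $\sh(A+B)=\sh A\,\ch B+\ch A\,\sh B$ rewritten using $\xi$. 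This identity captures precisely the index recombination forced by the join operator $\sum z_iy_j\pa_{z_{i+j}}$ and the cut operator $x^2\sum jz_{i+j}\pa_{z_i}\pa_{y_j}$, with the $\ch$-factors absorbed by the $x^2$-contribution via the product expansion $\ch(\tf12 z)=\xi(z)+\tf12 z\xi'(z)$ derived from the first identity.

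The main obstacle is not any single step but the careful bookkeeping of these substitutions. The second-order $x^2$-term is the most delicate, because it is the only place where $Z$ and $Y$ couple nontrivially through $e^Y$, and its three Leibniz contributions must simultaneously (i)~absorb the $\ch$-remainders produced by the join-cut identity when applied to $\pa_u Z$, and (ii)~combine with the $Y$-pieces to rebuild the $Z\,\pa_u Y$ term on the left. Once this cancellation is checked, the PDE holds identically and the theorem follows.
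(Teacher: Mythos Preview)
Your approach is essentially the same as the paper's: set $\Phi=Ze^{Y}$, check the initial condition via $\xi(0)=1$, then verify $\De\Phi=0$ by computing each of the four operator pieces in $\De$ (after dividing out $e^{Y}$) and reducing the resulting identities with the hyperbolic addition formula and the derivative relation for $\xi$. The paper carries the bookkeeping a little further than you do---it writes out each of the four contributions explicitly and collects them into two families $T_{\ell}$ (pure $z_{\ell}$ terms) and $U_{\ell,m}$ (mixed $z_{\ell}y_{m}$ terms), then invokes the addition formulae to show both vanish---whereas your write-up stops at naming the relevant identities; also note that since $Z$ depends only on $\bfz$ and $Y$ only on $\bfy$, the Leibniz expansion of $\pa_{z_i}\pa_{y_j}(Ze^{Y})$ is a single term $(\pa_{z_i}Z)(\pa_{y_j}Y)e^{Y}$, not three.
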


\bpf
It is a straightforward matter to show that the Join-cut Equation with the given
boundary condition has a unique solution.  The remainder of the proof is a verification that $\Delta$ 
annihilates $\Psi$ and that the boundary condition is satisfied.
\dmrjdel{
In the proof, we use the following
easily established facts.
\begin{equation}\label{xipe}
\xi^{\pe}(x)=x^{-1}\ch(\tf{1}{2}x)-2x^{-2}\sh(\tf{1}{2}x),
\end{equation}
\begin{equation}\label{shadd}
\sh(a\pm b)=\sh(a)\ch(b)\pm\ch(a)\sh(b).
\end{equation}
}  
\dmrjdel{
Let $Z:=\sum_{\ell \ge 1} \ell z_{\ell}\xi(\ell ux) \xi(ux)^{\ell -2} u^{\ell-1}t^{\ell}$,
 $\;\;Y:=\sum_{m \ge 1}  y_m \xi(mux) \xi(ux)^m u^{m+1}t^m$, and $\Phi:=Ze^Y$. 
 }  
 
The operator $\Delta$ is a linear combination of four differential operators.
It is straightforward to obtain the four expressions for the application of each of these
operators to $\Psi.$ Let $\xh:=ux$ for brevity. Then the expressions are:  
\begin{eqnarray*}
e^{-Y}\f{\pa \Phi}{\pa u}&=&\sum_{\ell \ge 1} \ell z_{\ell}\xi(\xh)^{\ell-3} u^{\ell-2}t^{\ell}
\Big((\ell-1) \xi(\ell \xh) \xi(\xh)+\ell \xh \xi^{\pe}(\ell \xh) \xi(\xh)+(\ell -2)\xh
\xi(\ell \xh) \xi^{\pe}(\xh)
\Big) \\
&\mbox{}&+Z\sum_{m\ge 1} y_m \xi(\xh)^{m-1} u^m t^m
\Big( (m+1)\xi(m\xh)\xi(\xh)+m\xh \xi^{\pe}(m\xh) \xi(\xh) +m\xh\xi(m\xh) \xi^{\pe}(\xh)
\Big),
\end{eqnarray*}
\begin{eqnarray}
e^{-Y}t\f{\pa}{\pa t}t\sum_{i\ge 1}z_{i+1}\f{\pa\Phi}{\pa z_i}
&=&\sum_{i\ge 1} i z_{i+1}\xi(i\xh)\xi(\xh)^{i-2} u^{i-1}t^{i+1}
\Big( i+1+\sum_{m\ge 1}my_m\xi(m\xh)\xi(\xh)^m u^{m+1} t^m
\Big),  \nonumber\\
e^{-Y}\sum_{i,j\ge 1} z_i y_j\f{\pa\Phi}{\pa z_{i+j}}&=&
\sum_{i,j\ge 1}(i+j) z_i y_j \xi\big( (i+j)\xh\big) \xi(\xh)^{i+j-2} u^{i+j-1} t^{i+j}, \nonumber \\
e^{-Y} x^2 \sum_{i,j\ge 1} j z_{i+j} \f{\pa^2\Phi}{\pa z_i y_j}&=&
x^2\sum_{i,j\ge 1} i j z_{i+j} \xi(i\xh)\xi(j\xh)\xi(\xh)^{i+j-2} u^{i+j} t^{i+j} \nonumber \\
&=& \sum_{\ell\ge 1} z_{\ell} \xi(\xh)^{\ell-2} u^{\ell-2} t^{\ell} S_{\ell},\label{Ssum}
\end{eqnarray}
where, with $r:=\exp\left(\tf{1}{2}\xh\right)$, we have
\begin{eqnarray*}
S_{\ell}&=&\sum_{\substack{i,j\ge 1,\\i+j=\ell}}(r^i-r^{-i})(r^j-r^{-j})=
(\ell -1)(r^{\ell}+r^{-\ell})-2\f{r^{\ell-1}-r^{-\ell+1}}{r-r^{-1}}.
\end{eqnarray*}
Now let $\ta:=\tf{1}{2}\xh$, and
substituting this expression for $S_{\ell}$ in~(\ref{Ssum}), we obtain
the revised fourth expression
\begin{eqnarray*}
e^{-Y} x^2 \sum_{i,j\ge 1} j z_{i+j} \f{\pa^2\Phi}{\pa z_i y_j}&=&
2 \sum_{\ell\ge 1} z_{\ell} \xi(\xh)^{\ell-2} u^{\ell-2} t^{\ell}
\bigg( (\ell-1)\ch(\ell\ta)-\f{\sh\big((\ell-1)\ta\big)}{\sh(\ta)}
\bigg).
\end{eqnarray*}
Combining these four expressions, and recalling the
definition~(\ref{E_opsDd}) of the partial differential operator $\De$, we have
\begin{equation}\label{collectTU}
e^{-Y}\De\Phi=\sum_{\ell \ge 1}  z_{\ell}\xi(\xh)^{\ell-3} u^{\ell-2}t^{\ell}T_{\ell}
+\sum_{\ell,m\ge 1} z_{\ell}y_m \xi(\xh)^{\ell+m-3} u^{\ell+m-1}t^{\ell+m}U_{\ell,m},
\end{equation}
where $T_\ell,$ for $\ell\ge1$ and $U_{\ell,m},$ for $\ell,m\ge1,$  are explicit polynomials in
hyperbolic cosines and hyperbolic sines of multiples of $\theta$, and in $\theta,$
using~(\ref{xidef}).   
It is readily shown, using the addition formulae for hyperbolic sines and cosines that 
$T_{\ell}=0$ for $\ell\ge1$ and, similarly, that $U_{\ell,m} = 0$ for $\ell,m\ge 1.$
\dmrjdel{   
\begin{eqnarray*}
T_{\ell}&=&(\ell-1)\ta^{-2}\sh(\ta)\sh(\ell\ta)
+\Big(\ell\ch(\ell\ta)-\ta^{-1}\sh(\ell\ta)\Big)\ta^{-1}\sh(\ta)
+(\ell-2)\ta^{-1}\sh(\ell\ta)\Big(\ch(\ta)-\ta^{-1}\sh(\ta)\Big)\\
&&-\;\ell\ta^{-1}\sh\big((\ell-1)\ta\big)
-2\ta^{-1}\sh(\ta)
\bigg( (\ell-1)\ch(\ell\ta)-\f{\sh\big((\ell-1)\ta\big)}{\sh(\ta)}\bigg)\\
&=&(\ell-2)\ta^{-1}\Big(\ch(\ell\ta)\sh(\ta)+\sh(\ell\ta)\ch(\ta)-\sh\big((\ell-1)\ta\big)
\Big)=0,
\end{eqnarray*}
and, for $\ell,m\ge 1$,
\begin{eqnarray*}
U_{\ell,m}&=&\tf{m+1}{m}\ta^{-3}\sh(\ell\ta)\sh(m\ta)\sh(\ta)
+\ta^{-2}\sh(\ell\ta)\sh(\ta)\Big(\ch(m\ta)-\tf{1}{m}\ta^{-1}\sh(m\ta)\Big)\\
&&+\;\ta^{-2}\sh(\ell\ta)\sh(m\ta)\Big(\ch(\ta)-\ta^{-1}\sh(\ta)\Big)
-\ta^{-2}\sh\big((\ell-1)\ta\big)\sh(m\ta)
-\;\ta^{-2}\sh\big((\ell+m)\ta\big)\sh(\ta)\\
&=&\ta^{-2}\bigg(\sh(\ta)\Big(\sh(\ell\ta)\ch(m\ta)-\sh\big((\ell+m)\ta\big)\Big)
+\sh(m\ta)\Big(\sh(\ell\ta)\ch(\ta)-\sh\big((\ell-1)\ta\big)\Big)
\bigg)=0,
\end{eqnarray*}
where the last equality in each of the above simplifications follows
from~(\ref{shadd}). 
}  
Thus, from~(\ref{collectTU}), we have $\De\Phi=0$.
But $\xi(0)=1$, so $\left.\Phi\right|_{u=0}=z_1t$ and we conclude from
Theorem~\ref{jcut} and the uniqueness of the solution of the
Join-cut Equation that $\Psi=\Phi$, giving the result.
\epf

\subsection{An expression for the coefficients of $\Psi$}
It is now straightforward to determine the coefficients in the generating
series~$\Psi$, and thus obtain a proof of Theorem~\ref{t2}.
\vspace{.05in}

\noindent\underline{\em Proof of Theorem~\ref{t2}}.
Suppose that $\al$ is a partition of $n-i$ with $k$ parts. Then
for all $n\ge i\ge 1$, $k,g\ge 0$, Theorem~\ref{gsallg} and~(\ref{xidef}) gives 
($[A]B$ denotes the coefficient of $A$ in $B$)
\begin{eqnarray*}
[t^nu^{n+k-1+2g}x^{2g}z_iy_{\al}]\Psi
&=&\f{i}{|\Aut \al|}[u^{2g}x^{2g}]\xi(iux)\xi(ux)^{i-2}
\prod_{j=1}^{l(\al)}\xi(\al_j ux)\xi(ux)^{\al_j}\\
&=&\f{i}{|\Aut \al|}[x^{2g}]\xi(x)^{n-2}\xi(ix)
\prod_{j=1}^{l(\al)}\xi(\al_j x)\\
&=&\f{i}{|\Aut \al|}[x^{2g}]\exp\Big(\sum_{j\ge 1}\xi_{2j}q_{2j}(\al\cup i)x^{2j}\Big),
\end{eqnarray*}
so, together with~(\ref{cardiclass}) and~(\ref{gendef}), this gives
\begin{equation*}
c_g(i,\al)
= \f{(n+k-1+2g)!}{n!}\,\al_1\cd\al_k\, i
\sum_{\be\vdash g}\f{\xi_{2\be}q_{2\be}(\al\cup i)}{|\Aut\be|}.
\end{equation*}
But this is symmetric in $\al_1,\ld ,\al_k, i$, and the result
follows immediately by renaming $\al\cup i$ as $\al$, which has $m=k+1$ parts.
\epf
\vspace{.05in}

\dmrjdel{\section{The solution of the Join-cut Equation for low genera}
It is a straightforward matter to obtain the following first-order differential recurrence equation 
for $\Psi_g$ by equating coefficients of $x^g$ on each side of the Join-cut Equation given in 
Theorem~\ref{jcut}, for $g\ge 0.$
\begin{cor}\label{gjcut}
The series $\Psi_0=\Psi(t,u;\bfz,\bfy)$ is
the unique formal power series solution
of
\begin{equation}\label{e0}
\De\Psi_0=0,
\end{equation}
with initial condition $\Psi_0(t,0;\bfz,\bfy)=z_1 t$. For $g\ge 1$,
 $\Psi_g=\Psi_g(t,u;\bfz,\bfy)$ is 
the unique formal power series solution of the differential recurrence equation
\begin{equation}\label{e1}
\De\Psi_g= \delta\Psi_{g-1},
\end{equation}
with initial condition $\Psi_g(t,0;\bfz,\bfy)=0$.
\end{cor}

We now show that $\Psi_g$ may be obtained, without integration, by an iterative procedure
that that is correct provided it terminates.
Consider the formal power series
$$Z_m=\sum_{i\ge 1} i^m z_i t^i u^{i+1},\qquad\qquad\qquad 
Y_m=\sum_{i\ge 1} i^m y_i t^i u^{i+1},\qquad m\ge 0,$$
and terms of the parameterized form
\begin{equation}\label{e01}
T^a_{b,\al }=u^a e^{Y_0} Z_{b}Y_\al,
\end{equation}
where parameter $a$ is an integer, $b$ is a non-negative integer, and $\al$ is
a partition. In order to find solutions to
our partial differential equations~(\ref{e0}) and~(\ref{e1}), we
have the following result for
applying $\Delta$ and~$\delta$ to $T^a_{b,\al }$ with $b\ge 1$.

\begin{lem}\label{L1}
For $b\ge 1$,
\begin{eqnarray*}
\left( a\right) \text{ \ }\Delta T^a_{b,\al } &=&\left( 1+a+l\left( \alpha\right)+b \right) T^{a-1}_{b,\al }
-u^{a-1}e^{Y_{0}}Y_{\alpha }\sum_{k=0}^{b-2}
\binom{b}{k}\left( \left( -1\right)^{b-k}Y_{1}+Y_{b-k}\right) Z_{k}\\
&+&b u^{a-1}e^{Y_0}Z_{b-1} t\f{\partial}{\partial t} (Y_\al)
  -u^{a-1}e^{Y_{0}}\sum_{k=0}^{b-2}\left( -1\right) ^{b-k}\binom{b}{k}t\frac{%
\partial }{\partial t}\left( Z_{k}Y_{\alpha }\right), \\
\left( b\right) \text{ \ }\delta T^a_{b,\al } &=& u^{a+1} e^{Y_0}Y_{\al} 
\left(  
\sum_{p=0}^{1} \sum_{q=0}^{b+1-p}   \f{(-1)^{1-p}}{q+1} \binom{b+1-p}{q}
                     B_{b+1-p-q}Z_{p+q+1} \right. \\
              &+&\left. \sum_{l=1}^{l(\al)} \sum_{p=0}^{\al_l+1}\binom{\al_l+1}{p} \sum_{q=0}^{b+1+\al_l-p} 
                    \f{(-1)^{\al_l+1-p}}{q+1} \binom{b+\al_l+1-p}{q} B_{b+\al_l+1-p-q}
                   \f{Z_{p+q+1}}{Y_{\al_l}}
 \right),
\end{eqnarray*}
where $B_l$ is a Bernoulli number, defined by $\sum_{l\ge 0}B_l\f{x^l}{l!} = \f{x}{e^x-1}.$
\end{lem}

\noindent
Note that part~(a) of the above result holds for $b=1$ with
the convention that both summations on the right hand side are empty.

\begin{proof}
For part~(a), if $W_{m}$ denotes $Y_{m}$ or $Z_{m}$, then
\begin{equation*}
t\frac{\partial }{\partial t}W_{m}=W_{m+1,}\quad u\frac{\partial }{\partial u}W_{m}
=\left( 1+t\frac{\partial }{\partial t}\right) W_{m},
\end{equation*}
and it is readily shown that 
$$\frac{\partial T^a_{b,\al }}{\partial u} = u^{a-1}e^{Y_{0}}\left( 1+a+l\left( \alpha
\right) +Y_{0}+Y_{1}+t\frac{\partial }{\partial t}\right) \left(
Z_{b}Y_{\alpha }\right).$$
Now, we have
$$\sum_{i\geq 1}z_{i+1}\frac{\partial }{\partial z_{i}}Z_{b}=u^{-1}t^{-1}%
\sum_{k=0}^{b}\left( -1\right) ^{b-k}\binom{b}{k}Z_{k},$$
which gives
$$t\frac{\partial }{\partial t}t\sum_{i\geq 1}z_{i+1}\frac{\partial T^a_{b,\al }}{%
\partial z_{i}} = u^{a-1}e^{Y_{0}}\sum_{k=0}^{b}\left( -1\right) ^{b-k}\binom{b%
}{k}\left( Y_{1}+t\frac{\partial }{\partial t}\right) \left( Z_{k}Y_{\alpha
}\right),$$ 
and we have
$$\sum_{i,j\geq 1}z_{i}y_{j}\frac{\partial T^a_{b,\al }}{\partial z_{i+j}}
= u^{a-1}e^{Y_{0}}Y_{\alpha }\sum_{k=0}^{b}\binom{b}{k}Z_{k}Y_{b-k}.$$
Part~(a) of the result now follows by combining these expressions.

For part~(b), it follows  from the definition of $\delta$ that  
\begin{eqnarray*}
\delta T^a_{b,\al } &=& u^{a+1}e^{Y_0} Y_\al\sum_{m\ge1} z_m t^m u^{m+1} 
\sum_{\substack{i,j\ge1, \, i+j=m} } i^b
        \left( j+\f{1}{Y_{\al_l}}\sum_{l=1}^{l(\al)} j^{\al_l+1} \right).
\end{eqnarray*}
But, for $k\ge1,$
\begin{equation*}
\sum_{i=1}^{m-1} i^k =
\left[\f{x^k}{k!}\right]\f{e^{mx}-1}{x}\f{x}{e^x-1}=
 \sum_{j=1}^{k+1}\f{m^j}{j}\binom{k}{j-1} B_{k-j+1},
\end{equation*}
so
$$\sum_{i,j\ge1,\, i+j=m}i^b  j ^s  =
\sum_{p=0}^s (-1)^{s-p}\binom{s}{p} \sum_{q=0}^{b+s-p}  
     \f{1}{q+1}m^{p+q+1} \binom{b+s-p}{q}B_{b+s-p-q},
     $$
and part~(b) of the result now follows.
\end{proof}

It is now immediate to find a solution
to~(\ref{e0}) with the correct initial condition, and
thus obtain $\Psi_0$ as a single term of
the form~(\ref{e01}).

\begin{thm}\label{MAIN0}
$$\Psi _{0} =u^{-2}e^{Y_{0}}Z_{1}.$$
\end{thm}

\begin{proof}
{}Applying Lemma~\ref{L1}(a), we immediately obtain
$$\De u^{-2}e^{Y_0}Z_1=(1-2+0+1)u^{-3}e^{Y_0}Z_1=0,$$
and the result follows from Corollary~\ref{gjcut} by the straightforward calculation
$\left. \left(u^{-2}e^{Y_0}Z_1\right)\right|_{u=0}=z_1t.$
\end{proof}

\newcommand{\hPsi}{\widehat{\Psi}}
\newcommand{\ind}{\mathsf{ind}}

For positive $g$, we now propose to look for solutions to~(\ref{e1}) as (rational) linear
combinations of terms of the form~(\ref{e01}). For terms of this form,
we define the following terminology: we
call $b$ the \emph{index} of the term $T^a_{b,\al}$, and
say that a term with positive index is \emph{proper}, and a term
with index zero is \emph{improper}. For example, Lemma~\ref{L1}
can be described by using this
language, as follows: for a proper term $T^a_{b,\al }$, part~(a) states that
$$\Delta T^a_{b,\al } =\left( 1+a+l\left( \alpha\right)+b \right)\Delta T^{a-1}_{b,\al }+Q,$$
where $Q$ is a linear combination of proper and improper terms, each with index
strictly less than $b$; part~(b) states that $\de T^a_{b,\al }$ is a linear combination
of proper terms. If a term has a non-zero
coefficient in a linear combination, then we say that the term \emph{appears} in
that linear combination.

The above description of Lemma~\ref{L1} suggests the following algorithm, applied
iteratively for $g\ge 1$, to look for solutions to~(\ref{e1}) as linear
combinations of proper terms. Note that, from~(\ref{gendef}) and~(\ref{e01}),
we must have $a=2g-2$ for any term that appears in $\Psi_g$, and so we restrict
attention to terms with $a=2g-2$.
\begin{alg}\label{ITERATEg}
\emph{(For some $g\ge 1$, to
determine $\Psi_g$ as a finite linear combination of proper terms, given $\Psi_{g-1}$ as
a finite linear combination of proper terms.)} At each step $i$, for $i\ge 0$, we have 
three finite sums $L_i$, $R_i$, $S_i$ of terms.

\noindent
\underline{\emph{Step 0}}: Initially, we
have $L_0=S_0=0$, and $R_0=\de\Psi_{g-1}$ (where the latter
is determined by Lemma~\ref{L1}(b)).

\noindent 
\underline{\emph{Step i}}:  For $i\ge 1$, 
select $T^{2g-3}_{b_i,\al_i}$ to be any of the proper terms of largest index
that appear in $R_{i-1}$, and suppose that this term
has (non-zero) coefficient $c_i$.
Then let
\begin{equation}\label{recursg}
L_i=\f{c_i}{2g-1+l(\al_i)+b_i}T^{2g-2}_{b_i,\al_i},\qquad S_i=S_{i-1}+L_i,
\qquad R_i=R_{i-1}-\De L_i.
\end{equation}
Terminate when $R_i$ has no proper terms, and denote this terminating
value of $i$ by $m$.

\noindent
\underline{\emph{Claim}}: If $R_m=0$ (\emph{i.e.,} $R_m$ has no improper terms),
then $\Psi_g=S_m$.
\end{alg}

\noindent
\textit{Proof of Claim.}
Initially, from Lemma~\ref{L1}(b) and the above discussion, we
deduce that $R_0$ is a finite linear combination of proper terms. At
each step $i$, from Lemma~\ref{L1}(a) and the above discussion, we
deduce that $R_i$ is missing the term $T^{2g-3}_{b_i,\al_i}$, and
otherwise agrees with $R_{i-1}$ for all (proper) terms of index $b_i$ in $R_{i-1}$,
but that $R_i$ and $R_{i-1}$ can differ in the (proper or improper) terms of 
index strictly less than $b_i$ (note that in general for positive $i$,
improper terms will appear in $R_i$). Thus, this algorithm is finite, and the
terminating values $L_m$, $S_m$, $R_m$ are independent of the order
of selection of terms at each step. At this terminating step, 
it is immediate from~(\ref{recursg}) that
$$\De S_m=\sum_{i=1}^m\De L_i=\sum_{i=1}^m(R_{i-1}-R_i)=R_0-R_m
=\de\Psi_{g-1}-R_m,$$
and so, if $R_m=0$, then we have $\De S_m=\de\Psi_{g-1}$. Also,
since $\left. T^{2g-2}_{b,\al}\right|_{u=0}=0$ for every term
with $g\ge 1$, we have $\left. S_m \right|_{u=0}=0$ , and the
Claim follows from Corollary~\ref{gjcut}.
\hfill$\Box$

\vspace{.05in}

In the next result we apply Algorithm~\ref{ITERATEg}, to obtain
explicit expressions for~$\Psi_1$, $\Psi_2$, $\Psi_3$.

\begin{thm}\label{MAIN123}
\begin{eqnarray*}
\left( a\right) \text{ \ }\Psi _{1} &=&\frac{e^{Y_0}}{24}\Big(
Z_{3}+Z_{2}+Z_{1}\left( -2+Y_{1}+Y_{2}\right) \Big),\\
\left( b\right) \text{ \ }\Psi _{2} &=&\frac{u^2e^{Y_0}}{5760}\Big(
3Z_{5}+10Z_{4}+Z_{3}(-15+10Y_{1}+10Y_{2})+Z_{2}(-22+10Y_{1}+10Y_{2})\\
&&+Z_{1}(24 -22Y_{1}-15Y_{2}+10Y_{3}+3Y_{4}+5(Y_{1}+Y_{2})^2)\Big),\\
\left( c\right) \text{ \ }\Psi _{3} &=&\frac{u^4e^{Y_0}}{2^3 9!}\bigg(
9 Z_{7} + 63 Z_{6} + Z_{5}(-21+63Y_{1}+63Y_{2}) 
+ Z_{4}(-427 + 210 Y_{1}+ 210 Y_{2})\\
&&+ Z_{3}\Big( 252-357Y_{1}-210Y_{2}+210Y_{3}+63Y_{4}
+105 (Y_{1}+Y_{2})^2\Big)
+ Z_{2}\Big(604-504Y_{1}\\
&&-357Y_{2}+210Y_{3}+63Y_{4}+105(Y_{1}+Y_{2})^2\Big)
+Z_1\Big( -480+ 604Y_{1}+252Y_{2}-427Y_{3}\\
&&-21 Y_4 +63Y_{5}+9Y_{6}+21 ( Y_{1}+Y_{2})
(-12 Y_{1}-5 Y_{2}+10 Y_{3}+3Y_{4})
+35\left(Y_{1}+Y_{2}\right) ^{3}
\Big)
\bigg).
\end{eqnarray*}
\end{thm}
\begin{proof} {}For part~(a), we apply Algorithm~\ref{ITERATEg} with $g=1$.

\noindent
\underline{\emph{Step 0}}: Initially, we have $L_0=S_0=0$, and
from Theorem~\ref{MAIN0} and Lemma~\ref{L1}(b) (using the
Bernoulli numbers $B_0=1$, $B_1=-\tf{1}{2}$, $B_2=\tf{1}{6}$) we have
$$R_0=\de\Psi_0=\f{u^{-1}e^{Y_0}}{6}(Z_3-Z_1).$$
\underline{\emph{Step 1}}:
The largest index among the proper terms in $R_0$ is $3$, and 
we select the unique such term $u^{-1}e^{Y_0}Z_3$, so $b_1=3$, $\al_1=\vep$,
giving (applying Lemma~\ref{L1}(a) to determine $\De L_1$)
$$L_1=\f{e^{Y_0}}{24}Z_3,\qquad R_1=R_0-\De L_1=\f{u^{-1}e^{Y_0}}{24}\Big(
3Z_2+Z_1(-5+3Y_1+3Y_2)+Z_0(-Y_1+Y_3)\Big).
$$
\underline{\emph{Step 2}}: We select the
term $u^{-1}e^{Y_0}Z_2$ from $R_1$, so $b_2=2$, $\al_2=\vep$, giving
$$L_2=\f{e^{Y_0}}{24}Z_2,\qquad R_2=R_1-\De L_2=\f{u^{-1}e^{Y_0}}{24}\Big(
Z_1(-4+3Y_1+3Y_2)+Z_0(Y_2+Y_3)\Big).$$
\underline{\emph{Step 3}}:
The largest index among the proper terms in $R_2$ is $1$, and from the terms with index $1$,
we (arbitrarily) 
select $u^{-1}e^{Y_0}Z_1$ from $R_2$, so $b_3=1$, $\al_3=\vep$,
giving
$$L_3=-\f{e^{Y_0}}{12}Z_1,\qquad R_3=R_2-\De L_3=\f{u^{-1}e^{Y_0}}{24}\Big(
Z_1(3Y_1+3Y_2)+Z_0(Y_2+Y_3)\Big).$$
\underline{\emph{Step 4}}:
The largest index among the proper terms in $R_3$ is again $1$, and from the terms with index $1$,
we select $u^{-1}e^{Y_0}Z_1Y_1$, so $b_4=1$, $\al_4=(1)$, 
giving
$$L_4=\f{e^{Y_0}}{24}Z_1Y_1,\qquad R_4=R_3-\De L_4=\f{u^{-1}e^{Y_0}}{24}\Big(
3Z_1Y_2+Z_0Y_3\Big).$$
\underline{\emph{Step 5}}:
We select the term $u^{-1}e^{Y_0}Z_1Y_2$, so $b_5=1$, $\al_5=(2)$, 
giving
$$L_5=\f{e^{Y_0}}{24}Z_1Y_2,\qquad R_5=R_4-\De L_5=0.$$
Thus the terminates with $m=5$, since $R_5=0$, and we  conclude that $\Psi_1=S_5=
L_1+\cd +L_5$ from the \emph{Proof of claim} above, giving part~(a) of the result.

{}For part~(b),  we apply Algorithm~\ref{ITERATEg} with $g=2$.

\noindent
\underline{\emph{Step 0}}: Initially, we have $L_0=S_0=0$, and
from part~(a) of this result and Lemma~\ref{L1}(b) we have
$$R_0=\de\Psi_1=\f{ue^{Y_0}}{720}\Big(
3Z_5+5Z_4+Z_3(-15+5Y_1+5Y_2)-5Z_2+Z_1(12-5Y_1-5Y_2)\Big).$$
\underline{\emph{Step 1}}:
The largest index among the proper terms in $R_0$ is $5$, and 
we select the unique such term $ue^{Y_0}Z_5$, so $b_1=5$ and $\al_1=\vep$.
The remaining steps are routine but cumbersome, and 
we have automated them by using \textsf{Maple}. 
At the terminating step,
we again find that the improper terms in $R_m$ vanish, so the
condition in the Claim of Algorithm~\ref{ITERATEg} is satisfied, and
we have determined $\Psi_2$, as given in part~(b) of the result.

{}For part~(c),  we apply Algorithm~\ref{ITERATEg} with $g=3.$
At the terminating step,
we again find that the improper terms in $R_m$ vanish, so the
condition in the Claim is satisfied, and
we have determined $\Psi_3$, as given in part~(c) of the result.
\end{proof}

In fact, we have 
used \textsf{Maple} to 
determined $\Psi_g$ for values
of $g$ up to $8$. In each case, Algorithm~\ref{ITERATEg} terminates
with the condition in the Claim satisfied. We do not display the results,
because of their length. Based on these results, we make the following
conjecture.

\begin{cnj}\label{formPsig}
{}For $g\ge 0$, $\Psi _{g}$ has the form $\Psi _{g}=u^{2g-2}
e^{Y_{0}}f_{g}\left( Z_{1},\ldots ,Z_{2g+1},Y_{1},\ldots ,Y_{2g}\right) ,$
where $f_{g}$\textit{\ }is a polynomial in its arguments. Moreover, $f_{g}$%
\textit{\ }is homogeneous of degree $1$ in
the $Z_i$'s and of total degree~$g$ in the $Y_i$'s.
\end{cnj}

We have been unable to prove Conjecture~\ref{formPsig} in general. In fact,
we  have been unable to prove the weaker result in general, that $\Psi_g$ is
a linear combination of proper terms, since we have been unable to prove
that the condition in the Claim of Algorithm~\ref{ITERATEg} always holds.
Note that, if the above condition did not hold for some $g$, then we
would be unable to express $\Psi_g$ as a linear combination of
proper terms (or even improper terms), since the analogue of Lemma~\ref{L1}(a)
for an improper term ($b=0$) is
$$\De T^a_{0,\al }=\left( 1+a+l\left( \alpha\right)\right)T^{a-1}_{0,\al }
+u^{a-1}e^{Y_0}\left(Y_1+t\f{\pa}{\pa t}\right)\left(z_1tu^2Y_{\al}\right),$$
in which the right hand side is \emph{not} a linear combination of terms
(because of the presence of $z_1$).
\section{An expression for the coefficients of $\Psi_m.$}
The following proposition compactly describes how to determine coefficients
in the term $T^a_{b,\ga}$, as an alternating summation over
the set $\Pi_m$ of all \emph{set partitions} of $[m]$.

\begin{pro}\label{coex}
Let  $\al\vdash n-i,$ $l(\al)=k,$  and $l(\gamma)=m$. Then
\begin{eqnarray*}
\left[z_i y_\al t^n u^{n+k+1}\right] Z_bY_\ga e^{Y_0}
=i^b \!\!\!\!\sum_{\substack{\{\mcA_1,\mcA_2,\ldots\} \in\Pi_m}} 
\prod_j (-1)^{|\mcA_j|-1}(|\mcA_j|-1)! p_{\sum_{l\in\mcA_j}\ga_l}(\al).
\end{eqnarray*}
\end{pro}
\bpf
For arbitrary indeterminates $x_{i,j},$ we clearly have
$$\left[y_\al t^{n-i} u^{n-i+k }\right] \prod_{l=1}^k \left(\sum_{j\ge1}x_{l,j} y_j t^j u^{j+1}\right)
= \f{1}{|\Aut\al|} \sum_{\si\in\fS_k} \prod_{l=1}^k  x_{l,\al_{\si(l)}},$$
which implies that
$$
\left[z_i y_\al t^n u^{n+k+1}\right] Z_bY_\ga e^{Y_0} 
=\f{1}{(k-m)!} \left[z_i y_\al t^n u^{n+k-1}\right]  Z_bY_\ga Y_0^{k-m}
= i^b \!\!\!\!\sum_{\substack{1\le i_1,\ldots,i_m\le k,\\ i_p\neq i_q, p\neq q}}
\;\prod_{l=1}^m \al_{i_l}^{\ga_l}.
$$
Now  $\Pi_m,$ ordered by refinement, is the lattice of all set partitions of~$[m].$ Let $\widehat{0}$ be its minimal element and $\mu_m$ be its M\"{o}bius function. 
For $\mcA\in\Pi_m,$ let $\mcA_1,\mcA_2,\ld$ be its blocks. Then $\mu(\widehat{0},\mcA) 
=\prod_j (-1)^{|\mcA_j|-1}(|\mcA_j|-1)!$ The result
follows by applying the M\"{o}bius Inversion Theorem to
the right hand side of the above equation.
\epf

As an example of Proposition~\ref{coex} with $m=3$,  we have
$$\left[z_i y_\al t^n u^{n+k+1}\right] Z_5Y_4Y_3Y_2 e^{Y_0}
=i^5(p_4p_3p_2-p_7p_2-p_5p_4-p_6p_3+2p_9),$$
where $p_l=p_l(\al)$, with the five terms above corresponding to the five
set partitions in $\Pi_3$: $\{\{1\},\{2\},\{3\}\}$,
$\{\{1,2\},\{3\}\}$, $\{\{1\},\{2,3\}\}$,
$\{\{1,3\},\{2\}\}$, $\{\{1,2,3\}\}$. 
We are now in a position to prove Theorem~\ref{t2}.
\vspace{.1in}

\noindent\underline{\em Proof of Theorem~\ref{t2} with~$g=0$} 
\; From~(\ref{cardiclass}),~(\ref{gendef}), Theorem~\ref{MAIN0} and Proposition~\ref{coex}
with~$m=0$, we have
$$c_0(i,\al)=\f{(n+k-1)!}{n\, |\mcC^{(i)}_{\al}|}
[t^nu^{n+k-1}z_iy_{\al}]\Psi_0
=\f{(n+k-1)!}{n!} \al_1\cd \al_k\, i.$$
But this is symmetric in $\al_1,\ld ,\al_k, i$, and the result
follows immediately by renaming $\al\cup i$ as $\al$, which has $m=k+1$ parts.
\epf
\vspace{.05in}

\noindent\underline{\em Proof of Theorem~\ref{t2} with~$g=1$} 
\; From~(\ref{cardiclass}),~(\ref{gendef}), Theorem~\ref{MAIN123}(a) and
Proposition~\ref{coex} with~$m=0,1$, we have
\begin{eqnarray*}
c_1(i,\al)&=&\f{(n+k+1)!}{n\, |\mcC^{(i)}_{\al}|}
[t^nu^{n+k+1}z_iy_{\al}]\Psi_1\\
&=& \f{1}{24}\f{(n+k+1)!}{n!} \al_1\cd \al_k
\Big(i^3+i^2+i\big(-2+p_1(\al)+p_2(\al)\big) \Big)\\
&=&\f{1}{24}\f{(n+k+1)!}{n!} \al_1\cd \al_k\, i
\Big( p_2(\al')+p_1(\al')-2\Big),
\end{eqnarray*}
where $\al'=\al\cup i$.
But this is symmetric in $\al_1,\ld ,\al_k, i$, and the result
follows immediately as in the proof of Theorem~\ref{t2} with~$g=0$.
\epf
\vspace{.05in}

\noindent\underline{\em Proof of Theorem~\ref{t2} with~$g=2$} 
\;  We use the same approach so more of the details are suppressed.
From~(\ref{cardiclass}),~(\ref{gendef}), Theorem~\ref{MAIN123}(b) and
Proposition~\ref{coex} with~$m=0,1,2$, we have
\begin{eqnarray*}
c_2(i,\al)&=&\f{(n+k+3)!}{n\, |\mcC^{(i)}_{\al}|}
[t^nu^{n+k+3}z_iy_{\al}]\Psi_2\\
&=&\f{1}{5760}\f{(n+k+3)!}{n!} \al_1\cd \al_k\, i\\
&\times &
\Big( 24-22p_1-15p_2+10p_3+3p_4+5(p_1^2-p_2)+10(p_1p_2-p_3)+5(p_2^2-p_4)\Big),
\end{eqnarray*}
where $p_i=p_i(\al')$ and $\al'=\al\cup i$.
But this is symmetric in $\al_1,\ld ,\al_k, i$, and the result
follows as in the proof of Theorem~\ref{t2} with~$g=0,1$, with some additional
simplification to express it in terms of $q_2$ and $q_4$.
\epf
\vspace{.05in}

\noindent\underline{\em Proof of Theorem~\ref{t2} with~$3\le g\le 8$}
We have automated the approach for $g=0,1,2$ above by \textsf{Maple},
applying Proposition~\ref{coex} to $\Psi_g$ in each case. For~$g=3$, the
expression for $\Psi_3$ is displayed as Theorem~\ref{MAIN123}(c), but
for $4\le g\le 8$, the expressions for $\Psi_g$ become huge, and are not
displayed.
\epf
\vspace{.05in}

The fact that 
the expressions obtained in Theorem~\ref{t2} are all homogeneous
polynomials in $q_2,q_4,\ld$, 
cannot be predicted by simply applying Proposition~\ref{coex}
to Conjecture~\ref{formPsig}.  We cannot prove otherwise that
this holds in general, but record below the
obvious conjecture based on these results.

\begin{cnj}\label{conjcoefq}
For $g\ge 0,$
$$Q_g(\al) = \sum_{|\beta|=g} c_g(\beta) q_{2\beta},$$
where $2\beta=(2\beta_1,2\beta_2,\ld)$, and $c_g(\beta)$ are rational.
\end{cnj}
}

\section{Further questions}

The following questions arise in the light of the results of this paper:
\begin{enumerate}
\item
Is it possible to find a simple proof of the centrality in Theorem~\ref{t2},
without evaluating the class coefficients $a_g(\al)$? 
This might follow from a decomposition for Young-Jucys-Murphy elements, 
or from a more elementary argument in the symmetric group.

\item
Is it possible to give a direct proof of Corollary~\ref{connS2n-1}
or~\ref{connSn} -- \textit{i.e.}, to establish these relationships between $a_g(\al)$ and $b_g(\al)$
without appealing, as we have, to the explicit formulae? 
This would be particularly interesting, since $b_g(\al)$, as defined, is clearly central. 
Such a proof might involve Young-Jucys-Murphy elements, or a more elementary
argument in the symmetric group, or the geometry of branched covers. Presumably
such a proof would contain a solution to Question 1 above.

\item
In~\cite{gjv}, the polynomiality of $b_g(\al)$ (in the parts of $\al$) in Theorem~\ref{t2sim},
was the basis for a conjectured ELSV-type formula for $H^g_{(n),\al}$, involving a Hodge 
integral over some, unspecified, moduli space.
Does the polynomiality of $a_g(\al)$ in Theorem~\ref{t2} also lead to a similar ELSV-type formula 
when $a_g(\al)$ is rescaled as a covering number?

\end{enumerate}

\section*{Acknowledgements}

The work of both authors was supported by Discovery Grants from NSERC.
We thank John Irving, Igor Pak, Amarpreet Rattan and Ravi Vakil for helpful comments.



\begin{thebibliography}{ELSV}


\bibitem[ELSV]{elsv1} T. Ekedahl, S. Lando, M. Shapiro and A. Vainstein,
\emph{Hurwitz numbers and intersections on moduli spaces of curves},
Invent. \ Math. \ 146 (2001),  297--327.

\bibitem[GJ0]{gj0} I. P. Goulden and D. M. Jackson,
{\em Transitive factorizations into transpositions and holomorphic
mappings on the sphere},
Proc.\  Amer.\  Math.\  Soc.,
{\bf 125} (1997), 51--60.

\bibitem[GJ1]{gj1} I. P. Goulden and D. M. Jackson,
{\em A proof of a conjecture for the number of ramified
coverings of the sphere by the torus},
J.\ Combinatorial Theory (A),
{\bf 88} (1999), 246--258.

\bibitem[GJ3]{gj2} I. P. Goulden and D. M. Jackson,
{\em The number of ramified coverings of the sphere by the
double torus, and a general form for higher genera},
J.\ Combinatorial Theory (A),
{\bf 88} (1999), 259--275.

\bibitem[GJVn]{gjvai}
I. P. Goulden, D. M. Jackson and A. Vainshtein,
{\em The number of ramified coverings of the sphere by the torus and
surfaces of higher genera}, Annals of Combinatorics, {\bf 4} (2000),
27--46. 

\bibitem[GJV]{gjv}
I. P. Goulden, D. M. Jackson, R. Vakil,
{\em Towards the geometry of double Hurwitz numbers},
Adv.\ Math.\ {\bf 198} (2005), 43--92. 

\bibitem[H]{h}
A. Hurwitz,
{\em Ueber Riemann'sche Fl\"{a}chen mit gegebenen Verzweigungspunkten},
Math.\  Ann.\ {\bf 39} (1891), 1--60.

\bibitem[IR]{ir}
J. Irving and A. Rattan,
{\em Factorizations of permutations into star transpositions},\\
math.CO/0610640.

\bibitem[P]{p}
I. Pak,
{\em Reduced decompositions of permutations in terms of star transpositions,
generalized Catalan numbers and $k$-ary trees},
Disc.\ Math.\ {\bf 204} (1999), 329 -- 335.

\bibitem[VO]{vo}
A. M. Vershik and A. Yu. Okounkov,
{\em A new approach to the representation theory of the symmetric groups. II},
math.RT/0503040, v3.

\end{thebibliography}
\end{document}